\def\namedlabel#1#2{\begingroup
    #2%
    \def\@currentlabel{#2}%
    \phantomsection\label{#1}\endgroup
}
\newcommand{\R}{\mathbb{R}}\newcommand{\Nn}{\mathbb{N}}\newcommand{\F}{\mathcal{F}}\newcommand{\C}{\mathbb{C}}
\newcommand{\D}{\mathcal{D}}
\newcommand{\J}{\mathcal{J}}
\newcommand{\N}{\mathcal{N}}
\newcommand{\Ce}{\mathcal{C}}
\newcommand{\I}{\mathcal{I}}
\newcommand{\ri}{\mathrm{i}}
\newcommand{\re}{\mathrm{e}}
\numberwithin{equation}{section}
\def\de{\mathrm{d}}
\let\epsilon\varepsilon
\let\phi\varphi
\def\xt{\zeta}
\def\vt{z} 
\newtheorem{theorem}{Theorem}[section]
\newtheorem{proposition}[theorem]{Proposition}
\newtheorem{remark}[theorem]{Remark}
\renewenvironment{proof}[1][\proofname] {\par\pushQED{\qed}\normalfont\topsep6\p@\@plus6\p@\relax\trivlist\item[\hskip\labelsep\bfseries#1\@addpunct{.}]\ignorespaces}{\popQED\endtrivlist\@endpefalse} \makeatother
\title{Global existence and decay in multi-component reaction-diffusion-advection systems with different velocities: oscillations in time and frequency}
\author{Bj\"orn de Rijk\footnote{\textsc{Zentrum Mathematik, Technische Universit\"at M\"unchen, Boltzmannstr.~3, 85748 Garching bei M\"unchen, Germany. ~}
  \textit{E-mail address:} \texttt{bjoern.de-rijk@tum.de}} ~and Guido Schneider\footnote{\textsc{Institut f\"ur Analysis, Dynamik und Modellierung, Universit\"at Stuttgart, Pfaffenwaldring~57, 70569 Stuttgart, Germany. ~} \textit{E-mail address:} \texttt{guido.schneider@mathematik.uni-stuttgart.de}}}
\begin{document}

\maketitle

\begin{abstract}
It is well-known that quadratic or cubic nonlinearities in reaction-diffusion-advection systems can lead to growth of solutions with small, localized initial data and even finite time blow-up. It was recently proved, however, that, if the components of two nonlinearly coupled reaction-diffusion-advection equations propagate with different velocities, then quadratic or cubic mixed-terms, i.e.~nonlinear terms with nontrivial contributions from both components, do not affect global existence and Gaussian decay of small, localized initial data. The proof relied on pointwise estimates to capture the difference in velocities. In this paper we present an alternative method, which is better applicable to multiple components. Our method involves a nonlinear iteration scheme that employs $L^1$-$L^p$ estimates in Fourier space and exploits oscillations in time and frequency, which arise due to differences in transport. Under the assumption that each component exhibits different velocities, we establish global existence and decay for small, algebraically localized initial data in multi-component reaction-diffusion-advection systems allowing for cubic mixed-terms and nonlinear terms of Burgers' type.

\textbf{Keywords.} Reaction-diffusion-advection systems, long-time asymptotics, global existence, small initial data, Fourier analysis
\end{abstract}

\section{Introduction}

Let $n \in \Nn_{\geq 2}$. We consider reaction-diffusion-advection (RDA) systems on the real line of the form
\begin{align}
\partial_t u &= \D \partial_{xx} u + \Ce \partial_x u + f\left(u,\partial_x u\right), \qquad u(x,t) \in \R^n, t \geq 0, x \in \R, \label{GRD}
\end{align}
where $f \colon \R^n \times \R^n \to \R^n$ is a smooth nonlinearity satisfying $f(0,0), Df(0,0) = 0$ and
\begin{align*}\D := \mathrm{diag}(d_1,\ldots,d_n), \qquad \Ce := \mathrm{diag}(c_1,\ldots,c_n), \end{align*}
are diagonal matrices with diffusion coefficients $d_i > 0$ and velocities $c_i \in \R$. System~\eqref{GRD} describes the dynamics of $n$ diffusive species, which are each subject to a spatial species-dependent drift, such that the interactions within and among species are purely nonlinear. Such nonlinear interactions, as well as differences in diffusion rates and spatial drifts, arise naturally in various applications.

For instance, in transport-reaction problems in porous media~\cite{KRAU} one distinguishes between mobile species undergoing advection-diffusion and immobile species. Here, mass-action kinetics leads to purely nonlinear interactions~\cite{ERDI,GROE}. In addition, reaction-diffusion models describing the flow down an inclined plane~\cite{CAD,HSZ} exhibit differences in velocities as perturbations of the underlying background state are advected at a different speed than bifurcating periodic patterns. Here, at the onset of a hydrodynamic instability, the reaction terms are purely nonlinear. Finally, systems of the form~\eqref{GRD} also arise in mathematical applications. For instance, they capture the critical dynamics at the Eckhaus boundary, where periodic wave-train solutions to reaction-diffusion systems destabilize through a Hopf instability. The Eckhaus boundary plays an important role in the theory of pattern formation~\cite{ahlers}. We refer to our prior paper~\cite{RSDV} for further discussion and literature references.

We are interested in the effect of the velocities and the nonlinearity in~\eqref{GRD} on the long-time dynamics of small, localized initial data. It is beneficial to label nonlinear terms depending on their (possible) effect. To each term of the form
\begin{align} \prod_{i = 1}^n \prod_{j = 1}^n u_i^{a_i} \left(\partial_x u_j\right)^{b_j}, \qquad a_i, b_j \in \Nn_{\geq 0}, \label{Gnon1}\end{align}
we assign a number
\begin{align*} p := \sum_{i = 1}^n a_i + \sum_{j = 1}^n 2b_j.\end{align*}
We call the nonlinear term~\eqref{Gnon1} \emph{relevant} if $p < 3$, \emph{irrelevant} if $p > 3$ and \emph{marginal} if $p = 3$. For instance, any \emph{Burgers'-type term}, i.e.~any term of the form $\partial_x(u_i^2)$ with $i \in \{1,\ldots,n\}$, is marginal. Thus, any smooth nonlinearity can be labeled relevant, marginal or irrelevant by looking at the leading-order term of its power series expansion. Relevant and marginal terms in system~\eqref{GRD} can lead to growth and even finite-time blow-up of solutions with small, localized initial data; see~\cite[Section 1.1]{RSDV} for references. On the other hand, if the nonlinearity in~\eqref{GRD} is irrelevant, then solutions to~\eqref{GRD} with small, localized initial conditions always exist globally and exhibit diffusive Gaussian-like decay as one would expect from the linear dynamics only~\cite{PONC,ZHENG}. This classification of smooth nonlinearities was introduced in~\cite{BKL} and can be extended to $d$ spatial dimensions by replacing the critical threshold $p = 3$ by $p = 1 + \frac{2}{d}$, see also~\cite[Section 2]{UEC9}. The critical threshold $p = 1 + \frac{2}{d}$ is known as the \emph{Fujita exponent} after its first occurrence in the paper~\cite{FUJI} of Fujita.

We showed in~\cite{RSDV} that, if~\eqref{GRD} has two components that propagate with different velocities, then there is, besides the irrelevant ones, an additional class of nonlinear terms which are harmless. These so-called \emph{mixed-terms} have contributions from different components, i.e.~they are of the form~\eqref{Gnon1} such that there exist $i,j \in \{1,\ldots,n\}$ with $i \neq j$ and $(a_i + b_i)(a_j + b_j) \neq 0$. It was proved in~\cite{RSDV} that, if $n = 2$, all relevant and marginal terms in the nonlinearity are of mixed type and it holds $c_1 \neq c_2$, then solutions to~\eqref{GRD} with exponentially or algebraically localized initial data exist globally and decay in time with rate $t^{-1/2}$. To capture the effect of spatial transport between components, the proof in~\cite{RSDV} exploits \emph{pointwise estimates}~\cite{ZUH,HOWZUM}.

As pointed out in~\cite[Section 6]{RSDV}, it is not straightforward to extend the pointwise analysis to multiple components. In this paper, we present an alternative method which naturally applies to multiple component RDA systems. We prove that, if each component propagates with a different velocity and the nonlinearity in~\eqref{GRD} contains only irrelevant terms and marginal terms, which are either of mixed or of Burgers' type, then small, algebraically localized initial data exist globally and decay in time with rate $t^{-1/2}$. In particular, our result applies to systems of viscous conservation laws, which arise frequently in continuum mechanics~\cite{COUR,LAND}, by taking the nonlinearity in~\eqref{GRD} in \emph{divergence form}, i.e.~by taking $f(u,\partial_x u) = \partial_x \left(g(u)\right)$ where $g \colon \R^n \to \R^n$ is smooth with $g(0), Dg(0) = 0$. Thereby, our result confirms the findings in~\cite{SCHONB} for the case components propagate with different velocities, and goes beyond by including marginal nonlinearities of mixed type.

Before stating our main result, we introduce the necessary functional-analytic concepts. As usual, the (nonunitary) Fourier transform $\F \colon L^2(\R,\C^n) \to L^2(\R,\C^n)$ and its inverse $\F^{-1} \colon L^2(\R,\C^n) \to L^2(\R,\C^n)$ are determined by their action on the dense subspace $L^1(\R,\C^n) \cap L^2(\R,\C^n)$ of $L^2(\R,\C^n)$, which is given by
\begin{align}
\F(u)(k) = \int_\R \re^{-\ri k x} u(x) \de x, \qquad \F^{-1}(v)(k) = \frac{1}{2\pi} \int_\R \re^{\ri k x} v(k) \de k. \label{defFourier}
\end{align}
Throughout the manuscript we commonly use $u$ to denote elements in physical space, whereas $v$ is used to denote elements in Fourier space. We consider algebraically localized initial data of the form $\F^{-1}(v_0)$, where $v_0$ lies in the weighted Sobolev space
\begin{align*} W^{1,1}_\beta(\R,\C^n) := \left\{v \in W^{1,1}(\R,\C^n) : \|v\|_{\smash{W^{1,1}_\beta}} < \infty\right\}, \end{align*}
for some $\beta \in [0,\infty)$, which we endow with the norm
\begin{align*} \|v\|_{W^{1,1}_\beta} = \left\|(1 + |\cdot|)^\beta v\right\|_1 + \left\|(1 + |\cdot|)^\beta \partial_k v\right\|_1,\end{align*}
where $\|\cdot\|_1$ denotes the $L^1$-norm. Loosely speaking, initial data of the form $\F^{-1}(v_0)$ with $v_0 \in W^{1,1}_\beta(\R,\C^n)$ can be characterized as being just more regular than differentiable and exhibiting just stronger decay than $1/(1+|x|)$ as $x \to \pm \infty$, see Remark~\ref{remspaces} for more precise statements. To ensure that the initial condition $\F^{-1}(v_0)$ is real valued, we require that $v_0 \in W^{1,1}_\beta(\R,\C^n)$ satisfies the usual reality condition $v_0(-k) = \smash{\overline{v_0(k)}}$ for all $k \in \R$.

Finally, for $j \in \Nn_{\geq 0}$, $\alpha \in (0,1]$, $\Omega \subset \R^m$ open and $X$ a Banach space, we denote by $C^j(\Omega,X)$ the vector space of $j$-times continuously differentiable functions $f \colon \Omega \to X$ and let $C^{j,\alpha}(\Omega,X) \subset C^j(\Omega,X)$ be the subspace of function in $C^j(\Omega,X)$ whose $j$-th derivatives are (uniformly) H\"older continuous with exponent $\alpha$. The subspace $C_b^j(\Omega,X) \subset C^j(\Omega,X)$ of functions in $C^j(\Omega,X)$ having bounded derivatives up to order $j$ is a Banach space when equipped with the standard norm
\begin{align*}
\|u\|_{C^j} = \max_{|\beta| \leq j} \sup_{x \in \Omega} \left\|D^\beta u(x)\right\|_X.
\end{align*}
Similarly, $C_b^{j,\alpha}(\Omega,X) := C_b^j(\Omega,X) \cap C^{j,\alpha}(\Omega,X)$ is a Banach space when endowed with the H\"older norm:
\begin{align*}
\|u\|_{C^{j,\alpha}} = \|u\|_{C^j} + \max_{|\beta| = j} \sup_{\begin{smallmatrix} x,y \in \Omega \\ x \neq y\end{smallmatrix}} \frac{\left\|D^\beta u(x) - D^\beta u(y)\right\|_X}{\|x-y\|^\alpha},\end{align*}
where $\|\cdot\|$ denotes the Euclidean norm in $\R^m$.

We are now ready to state our main result.

\begin{theorem} \label{mainresult1}
Let $\alpha > 0$. Let the coefficients in~\eqref{GRD} satisfy $d_i > 0$ and $c_i \neq c_j$ for all $i,j \in \{1,\ldots,n\}$ with $i \neq j$. Suppose that there exist constants $C \geq 1$ and $r_0 > 0$ such that the nonlinearity $f \in C^4(\R^n \times \R^n,\R^n)$ in~\eqref{GRD} satisfies
\begin{align}
\begin{split}
\|f(a,b)\| \leq C\left(\|b\|^2 + \|a\|\|b\| + \|a\|^4 + \sum_{i = 1}^n \sum_{j = 1}^n \sum_{\begin{smallmatrix} m \in \{1,\ldots,n\},\\ m \neq j\end{smallmatrix}}\! |a_i||a_j||a_m|\right),
\end{split} \label{nonlinearbounds1}
\end{align}
for all $a, b \in \R^n$ with $\|a\|,\|b\| \leq r_0$. Then, for all $\epsilon > 0$ there exists a $\delta > 0$ such that for each $v_0 \in \smash{W^{1,1}_{1+\alpha}(\R,\C^n)}$, satisfying $\|v_0\|_{\smash{W^{1,1}_1}} \leq \delta$ and $\smash{v_0(-k) = \overline{v_0(k)}}$ for all $k \in \R$,~\eqref{GRD} has a classical global solution $u \in \smash{C^{1,\frac{\alpha}{2}}}\big((0,\infty),\smash{C_b^{3,\alpha}}(\R,\R^n)\big)$ with initial condition $u(0) = \F^{-1}(v_0)$ enjoying the temporal decay estimates
\begin{align} \|u(t)\|_\infty  \leq \frac{\epsilon}{\sqrt{1 + t}}, \qquad \|\partial_x u(t)\|_\infty  \leq \frac{\epsilon}{1 + t}, \qquad \text{for } t \geq 0. \label{temporaldec}
\end{align}
in the $L^\infty$-norm. In addition, each component of $u$ is polynomially localized in an appropriate co-moving frame, i.e.~the pointwise estimate
\begin{align} \left|u_i(x,t)\right| + \frac{\sqrt{1+t}}{\ln(2+t)} \left|\partial_x u_i(x,t)\right| \leq \frac{\epsilon}{1 + |x + c_i t| + \sqrt{t}}, \label{pointdec}\end{align}
holds for all $x \in \R$, $t \geq 0$ and $i \in \{1,\ldots,n\}$.
\end{theorem}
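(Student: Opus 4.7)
The plan is to work throughout in Fourier space and construct the solution by a fixed-point iteration in a weighted function space that simultaneously tracks the $t^{-1/2}$-decay of $\|u(t)\|_\infty$ and the algebraic localization of each component in its co-moving frame. Fourier-transforming \eqref{GRD} component-wise and applying Duhamel's principle yields the mild formulation
\begin{equation*}
\hu_i(k,t) = \re^{-(d_i k^2+\ri c_i k)t}\, v_{0,i}(k) + \int_0^t \re^{-(d_i k^2+\ri c_i k)(t-s)}\, \F\bigl(f(u,\partial_x u)\bigr)_i(k,s)\, \de s,
\end{equation*}
with $\hu_i := \F(u_i)$. To bring the oscillatory structure responsible for cancellation in the mixed nonlinear terms to the surface, I introduce for each $i$ the co-moving Fourier variable $\tilde v_i(k,t):=\re^{\ri c_i k t}\hu_i(k,t)$, in terms of which the linear symbol becomes the purely real, dissipative heat kernel $\re^{-d_i k^2 t}$.

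Second, I define an iteration norm of the schematic form
\begin{equation*}
\|v\|_{\X,t} := \sup_{0 \le s\le t} \Bigl[(1+s)^{1/2}\|v(s)\|_1 + \phi(s)\|k\, v(s)\|_1 + \|\partial_k \tilde v(s)\|_1\Bigr],
\end{equation*}
with a weight $\phi(s)$ at most logarithmically sharper than $1+s$, possibly refined by an extra $(1+|k|)^\alpha$-weight reflecting the additional regularity in $W^{1,1}_{1+\alpha}$. The first summand, via $\|u_i\|_\infty \le (2\pi)^{-1}\|\hu_i\|_1$, translates directly into the $L^\infty$-decay \eqref{temporaldec}, while $\|\partial_k \tilde v_i\|_1$ controls $|x+c_i t|\,|u_i(x,t)|$ and hence produces the pointwise co-moving localization \eqref{pointdec}. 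Linear control of the inhomogeneous Duhamel term follows from standard Gaussian $L^1$-$L^\infty$ estimates on $\re^{-d_i k^2 t}$ together with $v_0\in W^{1,1}_{1+\alpha}$. For the genuinely irrelevant contributions $\|b\|^2$ and $\|a\|^4$ in~\eqref{nonlinearbounds1}, the Duhamel nonlinearity is closed by Young's convolution inequality in $k$ paired with an integrable time factor; the Burgers'-type part of $\|a\|\|b\|$, namely the diagonal $u_i\partial_x u_i$, is critical and is handled by exploiting the divergence-form factor $\ri k$ in Fourier, which combines with $\re^{-d_i k^2(t-s)}$ into an $L^\infty_k$-gain of $(t-s)^{-1/2}$.

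The analytic core of the proof is the treatment of the marginal \emph{mixed} terms $u_i u_j$, $u_i\partial_x u_j$ with $i\neq j$, and their cubic counterparts. For a representative bilinear mixed term, the Fourier convolution written in the $m$-th co-moving frame takes the form
\begin{equation*}
\re^{\ri c_m k t}(\hu_i \ast \hu_j)(k,t) = \int_\R \re^{\ri(c_m-c_j)\, l\, t}\, \tilde v_i(k-l,t)\, \tilde v_j(l,t)\, \de l,
\end{equation*}
after absorbing the remaining phase into $\tilde v_i$. Because all velocities are distinct, the phase $(c_m-c_j)\, l\, t$ is genuinely oscillatory in $l$ whenever $m\neq j$; integration by parts in $l$ converts the oscillation into a prefactor $\bigl((c_m-c_j)t\bigr)^{-1}$ at the cost of a $\partial_l$-derivative falling onto $\tilde v_i(k-l,t)\tilde v_j(l,t)$, precisely the object whose $L^1$-norm is tracked by $\X$. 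This manoeuvre supplies the missing $t^{-1}$-gain and turns the borderline-marginal term into an effectively irrelevant contribution. An analogous argument handles the cubic mixed terms: the constraint $m\neq j$ built into the final sum in~\eqref{nonlinearbounds1} ensures that at least one of the two internal frequencies carries a non-zero phase, on which a single integration by parts suffices to extract the necessary gain.

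Assembling these bounds yields a closed inequality of the form $\|\Phi(v)\|_{\X,t}\le C\|v_0\|_{W^{1,1}_1}+C\|v\|_{\X,t}^2$ on the Duhamel operator $\Phi$, uniformly in $t\ge 0$, from which global existence and the estimates~\eqref{temporaldec}--\eqref{pointdec} follow by a standard continuity argument in $t$; the classical regularity $C^{1,\alpha/2}((0,\infty),C^{3,\alpha}_b(\R,\R^n))$ of the mild solution is produced a posteriori by parabolic Schauder theory, with the extra $\alpha$-weight in $W^{1,1}_{1+\alpha}$ providing the required H\"older control in time. The main obstacle is the delicate bookkeeping of the oscillatory integration by parts: every gain of $((c_m-c_j)t)^{-1}$ is paid for by a $\partial_l$-derivative on $\tilde v_i\tilde v_j$, and the weighted norms $\|\partial_k\tilde v_\ell\|_1$ themselves carry mild time-growth through the time-dependent phase $\re^{\ri c_\ell k t}$. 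Verifying that these losses balance exactly against the gains---across all irrelevant, Burgers'-type, and mixed-type contributions---is what forces the precise choice of $\phi$ and the auxiliary weights in $\X$ and, ultimately, produces the logarithmic factor $\ln(2+t)$ visible in the pointwise estimate~\eqref{pointdec} for $\partial_x u_i$.
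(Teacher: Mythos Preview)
Your overall strategy---co-moving Fourier variables $\tilde v_i=\re^{\ri c_ikt}\hu_i$, a weighted $L^1$-in-frequency norm, and integration by parts in the convolution variable to extract $t^{-1}$ from mixed terms---matches the paper's approach. However, there are two genuine gaps that would prevent the scheme from closing as written.

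First, your iteration norm $\X$ is too coarse. It omits $\||\cdot|\,\partial_k\tilde v(s)\|_1$, which is what controls $|x+c_it|\,|\partial_x u_i(x,t)|$ and is therefore indispensable for the $\partial_x u_i$-part of the pointwise estimate \eqref{pointdec}; the paper tracks this quantity with weight $\sqrt{1+s}/\ln(2+s)$, and this is precisely where the logarithm enters. More subtly, the paper's norm also contains the $L^2$-component $(1+s)^{3/4}\||\cdot|\tilde v(s)\|_2$. This is not cosmetic: interpolating $\||\cdot|\tilde v\|_2$ from the $L^1$- and $L^\infty$-pieces you do control introduces an extra $\sqrt{\ln(2+s)}$ that propagates into $\|\partial_k\tilde v\|_1$ via the irrelevant-term estimate and prevents closure. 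Without both of these additions your quadratic inequality on $\X$ will not hold uniformly in $t$.

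Second, your treatment of the Burgers'-type terms is incomplete. The factor $\ri k$ indeed handles $\||\cdot|^a M_{ijj}(t)\|_1$, but when you differentiate in $k$ to estimate $\|\partial_k M_{ijj}(t)\|_1$ and $\||\cdot|\partial_k M_{ijj}(t)\|_1$, the derivative hitting the phase $\re^{\ri(c_j-c_i)ks}$ produces an extra factor of $s$ with no compensating decay. For $i\neq j$ the paper recovers this loss by integrating by parts \emph{in time}, using that $c_i\neq c_j$ and that the divergence-form factor $k$ cancels the would-be singularity at $k=0$; the temporal derivative $\partial_s(\tilde v_j^{*2})$ is then replaced via the evolution equation. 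Integration by parts in frequency alone does not suffice here, and your proposal does not mention this step.

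A minor point: you list $u_iu_j$ among the ``marginal mixed terms'' to be treated. Such terms are \emph{relevant} ($p=2$), are excluded by the hypothesis \eqref{nonlinearbounds1}, and in fact cannot be handled by a single integration by parts in frequency---doing so in the $\partial_k$-estimate would require control of $\partial_k^2\tilde v$, which the scheme does not provide. The terms actually in play are the bilinear $u_j\partial_x u_l$ (from $\|a\|\|b\|$) and the cubic $u_ju_lu_m$ with $l\neq m$; for these a single frequency integration by parts is enough, as you outline.
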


We emphasize that the decay estimates obtained in Theorem~\ref{mainresult1} are as expected from the linear dynamics in~\eqref{GRD} only. Indeed, it is not difficult to verify that the solution $u(t)$ to the corresponding linear system
\begin{align}
\partial_t u &= \D\partial_{xx} u + \Ce \partial_x u, \qquad u(x,t) \in \R^n, t \geq 0, x \in \R, \label{GRDlin}
\end{align}
having initial condition $u(0) = \F^{-1}(v_0)$ with $v_0 \in W^{1,1}_1(\R,\C^n)$, satisfies~\eqref{temporaldec} and~\eqref{pointdec}; we refer to~\S\ref{sec:lin} for more details. On the other hand, the initial conditions
\begin{align*}u_i(x) = \re^{-\frac{x^2}{4d_i}} E_i, \qquad i \in \{1,\ldots,n\},\end{align*}
where $E_i \in \R^n$ is the $i$-th unit vector, satisfy $\F(u_i) \in W^{1,1}_{1+\alpha}(\R,\C^n)$ and yield the family of Gaussian solutions
\begin{align*}
u_i(x,t) = \frac{\re^{-\tfrac{(x+c_it)^2}{4d_i(1+t)}}}{\sqrt{1+t}} E_i, \qquad i \in \{1,\ldots,n\},
\end{align*}
to~\eqref{GRDlin} attaining the decay rates in~\eqref{temporaldec} and~\eqref{pointdec} up to the $\ln(2+t)$-factor in~\eqref{pointdec}. Thus, upon taking the nonlinearity in~\eqref{GRD} identically zero, one observes that the estimates in Theorem~\ref{mainresult1} are sharp up to the $\ln(2+t)$-factor in~\eqref{pointdec}. We strongly expect that this factor does not arise due to nonlinear effects, and can in fact be avoided by extending our analysis as outlined in the subsequent Remark~\ref{logrem}. However, in order to prevent this work from being overly technical, we refrain from doing so.

Thus, Theorem~\ref{mainresult1} shows that, if components propagate with different velocities in~\eqref{GRD}, then marginal mixed-terms and Burgers'-type terms do not affect the long-time behavior of small, algebraically localized initial data and solutions decay as predicted by the linear dynamics. We emphasize that, in general, marginal mixed-terms can be decisive for the long-term dynamics. For instance, in~\cite{ESCLEV} it was proved that every solution to
\begin{align*}
\begin{split}
\partial_t u_1 &= \partial_{xx} u_1 + u_1^{p_1}u_2^{q_1},\\
\partial_t u_2 &= \partial_{xx} u_2 + u_1^{p_2}u_2^{q_2},
\end{split} \qquad t \geq 0, x \in \R,
\end{align*}
having initial data $(u_{1,0},u_{2,0})$ satisfying $u_{1,0},u_{2,0}\geq 0$ and $u_{1,0}u_{2,0} \neq 0$ pointwise, blows up in finite time, if it holds $p_i,q_i \in \{1,2\}$ and $p_i+q_i = 3$ for $i = 1,2$.

Our proof of Theorem~\ref{mainresult1} relies on the analysis of~\eqref{GRD} in Fourier space. We exploit that a change to a \emph{co-moving frame} $\zeta_i = x + c_it$ in physical space corresponds to a multiplication with the exponential $\re^{-\ri c_i kt}$ in Fourier space. We multiply each component $v_i(k,t)$ of the Fourier transform $v(k,t) := (\F u)(k,t)$ of $u(x,t)$ with the appropriate exponential $\re^{-\ri c_i kt}$. This introduces \emph{oscillatory} factors in front of those critical nonlinear terms in~\eqref{GRD} which are of mixed or of Burgers' type. Thus, oscillatory integrals arise in Duhamel's formulation, whose decay properties can be exploited, as long as the velocities are different, by \emph{integrating by parts in time or in frequency}. To control derivatives with respect to the frequency in Fourier space, which appear as a result of integration by parts, we require that $u(x,t)$ lies in a polynomially weighted Sobolev space. Eventually, we are able to close a nonlinear iteration scheme in this space using $L^1$-$L^p$-estimates. In~\S\ref{sec:ill} we illustrate the main ideas behind our approach and sketch in a simple setting how to handle the most critical nonlinear terms.

\subsection{Relationship with space-time resonances method} \label{sec:spacetime}

It is interesting to compare our approach with the so-called \emph{space-time resonances method}~\cite{GER2,GMS,GMS2}, which has been developed by Germain, Masmoudi and Shatah to prove global existence of small initial data in nonlinear dispersive equations in $\R^d$, such as the nonlinear Schr\"odinger (NLS) or nonlinear wave equations. As in our diffusive setting, linear dispersion terms can force solutions to decay and spread, whereas nonlinear terms can cause solutions to grow and even blow up in finite time, cf.~\cite{GMS3} and \cite{IKEW}. Again one can distinguish between irrelevant nonlinearities, which correspond to a sufficiently high power, so that small solutions are governed by the linear dynamics, and relevant or marginal nonlinearities that can contribute to the large-time behavior of small initial data. For instance, in the case of the NLS equation, the critical threshold is given by the \emph{Strauss exponent}~\cite{STRAUSS}, which, as the Fujita exponent, decreases with the spatial dimension $d$.

The method of space-time resonances combines the strength of two earlier developed approaches to handle relevant or marginal nonlinearities in dispersive equations. More precisely, the space-time resonances method identifies the normal form method of Shatah~\cite{SHAT} as an integration by parts in time in the Duhamel formula in Fourier space, whereas integration by parts in frequency can be related to the vector field method~\cite{KLAI2,KLAI3} developed by Klainermann. As in our approach, integration by parts of oscillatory integrals might reveal additional decay, which can be exploited to close a nonlinear iteration scheme. However, the integration by parts in time or frequency can introduce singularities, so-called \emph{time and space resonances}, in Duhamel's formulation. The location of the time and space resonances is largely dependent on the interplay between the linearity and the nonlinear terms and some nonlinear terms might even cancel some of the singularities arising. Thus, the space-time resonances method has the potential to, at least partially, uncover the effect of a large class of relevant and marginal nonlinearities on the long-time dynamics in nonlinear dispersive equations. We refer to~\cite{GERM} to a short exposition of the key ideas of the space-time resonances method in a simple setting.

\subsection{Comparison with earlier result obtained with the method of pointwise estimates}

The effect of different velocities in RDA systems on the long-time dynamics of small initial data was, to the authors' best knowledge, first investigated in the recent paper~\cite{RSDV}. We compare Theorem~\ref{mainresult1} with the earlier results in~\cite{RSDV}, which were obtained with the method of pointwise estimates.

Perhaps the most apparent improvement is that Theorem~\ref{mainresult1} applies to multi-component RDA systems, whereas the results in~\cite{RSDV} are restricted to two components. As outlined in~\cite[Section 6]{RSDV}, it is still open whether the method of pointwise estimates can capture the effect of differences in velocities in general multi-component RDA systems. The number of terms in the spatio-temporal weight increases rapidly with the number of components, which complicates the pointwise analysis. In addition, qualitative new terms occur, which we were unable to control using pointwise estimates. In particular, we did not succeed in controlling mixed-terms in the $u_i$-equations with contributions of the $u_j$-th and $u_k$-th component for $i,j,k \in \{1,\ldots,n\}$ pairwise different, e.g.~a quadratic or cubic mixed-term of the form $u_ju_k$ or $u_j^2u_k$ in the $u_i$-equation.

A second difference between the results in~\cite{RSDV} and Theorem~\ref{mainresult1} is that the required localization on initial data in~\cite{RSDV} is stronger. Besides to exponentially localized initial data, the method of pointwise estimates can be applied to small, algebraically localized initial data $u_0 \in C^{0,\alpha}(\R,\R^n)$ satisfying $\|(1+|\cdot|)^r u_0\| \leq \delta \ll 1$ for $r \geq 3$. Such polynomial localization of initial data leads to the pointwise decay estimate
\begin{align} \left|u_i(x,t)\right| &\leq C\left[\frac{1}{\left(1 + |x+c_it| + \sqrt{t}\right)^{r}} + \frac{\re^{-\tfrac{(x+c_it)^2}{M(1+t)}}}{\sqrt{1+t}}\right], \qquad i = 1,2, \label{pointwise}\end{align}
on the components of the associated solution $u(x,t)$ to~\eqref{GRD} for $x \in \R$ and $t \geq 0$, where $C,M \geq 1$ are $x$- and $t$-independent constant. Thus, one finds that the part of the $i$-th component $u_i(x,t)$, exhibiting the slowest temporal decay, is in fact exponentially localized in the appropriate co-moving frame. The localization required in Theorem~\ref{mainresult1} corresponds to the case $r = 1$. It is interesting to note that the algebraic pointwise bound is then no longer exhibiting faster temporal decay than the exponential bound on the right-hand side of~\eqref{pointwise} and precisely coincides with the bound~\eqref{pointdec} established in our analysis.

A third difference is that regularity conditions on initial data are more relaxed in~\cite{RSDV}, which can be explained by the fact that all nonlinear terms with derivatives in~\cite{RSDV} are in divergence form, i.e.~the nonlinearity in~\cite{RSDV} takes the form
\begin{align}f(u,\partial_x u) = h(u) + \partial_x (g(u)), \label{divergenceform} \end{align}
whereas the nonlinearity in Theorem~\ref{mainresult1} can possess terms with derivatives which are not in divergence form. The derivative in~\eqref{divergenceform} can be moved onto the Green's function via integration by parts in Duhamel's formula, thus requiring less regular initial data to prove local existence of classical solutions. It therefore comes as no surprise that we `lost' one derivative, i.e.~we need $1+\alpha$ (fractional) derivatives in Theorem~\ref{mainresult1}, whereas $\alpha$ derivatives sufficed in~\cite{RSDV}.

Finally, we compare the class of allowable marginal and relevant nonlinear terms in~\cite{RSDV} and Theorem~\ref{mainresult1}. First of all, relevant mixed-terms, i.e.~products of the form $u_iu_j$ with $i \neq j$ in~\eqref{GRD}, cannot be handled by the analysis in the current paper, whereas those terms can be dealt with using the methods in~\cite{RSDV}. As outlined in the subsequent Remark~\ref{quadraticmix}, we expect that our approach could only handle such terms, if we have control over \emph{all} derivatives of each component of the solution in the appropriate co-moving frame in Fourier space, i.e.~over all $k$-derivatives of $\re^{-\ri c_i kt} (\F u)_i(k,t)$ for $i = 1,\ldots,n$. This would mean that, at least the slowest decaying part of solution $u(x,t)$, should have a stronger-than-polynomial localization in physical space in the appropriate co-moving frame. We emphasize that this is precisely the control we gain using pointwise estimates. Indeed, both for exponentially and algebraically localized initial data in~\cite{RSDV}, the slowest decaying part of the $i$-th component of the solution to~\eqref{GRD} is bounded by the drifting Gaussian
\begin{align*}
\frac{\re^{-\tfrac{(x+c_it)^2}{M(1+t)}}}{\sqrt{1+t}},
\end{align*}
which is exponentially localized in the appropriate co-moving frame $\zeta_i = x + c_i t$ for each fixed $t \geq 0$.

Second, the nonlinearity in~\cite{RSDV} is of the form~\eqref{divergenceform}, whereas the nonlinearity in Theorem~\ref{mainresult1} can contain terms with derivatives which are not in divergence form. In particular, marginal terms of the $u_i \partial_x(u_j)$ with $i \neq j$ can be handled by the analysis in the current paper. As mentioned before, the $x$-derivative in~\eqref{divergenceform} can be moved onto the Green's function via integration by parts in Duhamel's formulation. Consequently, it is not necessary to control $\partial_x u(x,t)$ in the nonlinear iteration in~\cite{RSDV}. Thus, by incorporating the derivative $\partial_x u(x,t)$ into the nonlinear iteration scheme in~\cite{RSDV}, we expect that there are no obstructions to handle marginal terms of the form $u_i \partial_x (u_j)$ with $i \neq j$, because differences in velocities between components can be exploited.

Third, Burgers'-type terms of the form $\partial_x (u_i^2)$ are only allowed in the $u_i$-equation in~\cite{RSDV}. In fact, a Burgers'-type term $\partial_x (u_i^2)$ in the $u_j$-equation for $i \neq j$, can interact with, a seemingly harmless, quadratic mixed-term. In fact, in the toy problem
\begin{align}
\begin{split}
\partial_t u_1&= d_1\partial_{xx} u_1 + c_1 \partial_x u_1 + \kappa u_1u_2 + \beta u_2^3,\\
\partial_t u_2&= d_2\partial_{xx} u_2 + c_2 \partial_x u_2 + \gamma \partial_x \left(u_2^2\right),
\end{split} \qquad t \geq 0, x \in \R, \label{CAS3}
\end{align}
with $d_1,d_2 > 0$ and $c_1,c_2 \in \R$ with $c_1 \neq c_2$, global existence of solutions with small, exponentially localized initial conditions is proved in~\cite[Theorem 1.5]{RSDV} under the condition that
\begin{align}\beta - \frac{\gamma \kappa}{c_2 - c_1} < 0,\label{expreta}\end{align}
is satisfied for the coefficients $\kappa,\beta,\gamma \in \R$. Thus, in the presence of quadratic mixed-terms, i.e.~in case $\kappa \neq 0$, the Burgers'-type term $\partial_x(u_2^2)$ can compensate for the `dangerous' cubic term $\beta u_2^3$. Indeed, in case $\kappa = 0$ and $\beta > 0$, all solutions to~\eqref{CAS3} with positive initial data blow up in finite time~\cite{HAYA}. On the other hand, even if $\beta = 0$, the expression~\eqref{expreta} suggests that the marginal term $\partial_x (u_2^2)$ might affect the long-time asymptotics. In Theorem~\ref{mainresult1} a Burgers'-type term $\partial_x (u_i^2)$ in the $u_j$-equation is allowed (even if $i \neq j$). This is not totally unexpected, as quadratic mixed-terms are absent in the nonlinearities in Theorem~\ref{mainresult1}. We note that a $\partial_x (u_i^2)$-term in the $u_j$-equation introduces, in case $i \neq j$, an oscillatory term when we multiply the Fourier transform of each component $(\F u)_i(k,t)$ with the appropriate factor $\re^{-\ri c_i kt}$. These oscillations can be exploited by integrating by parts in time in Duhamel's formula, see~\S\ref{sec:ill}. As outlined in~\S\ref{sec:time}, integration by parts in time can, in the dispersive setting, be linked to a normal form approach. It is therefore interesting to note that the effect of the Burgers'-type term $\partial_x (u_2^2)$ in~\eqref{CAS3} on the long-time dynamics is also in~\cite{RSDV} exposed via the normal form transform $\vt(\xt,t) = u_1(\xt-c_1t,t) + \frac{\gamma}{c_2 - c_1} u_2(\xt-c_1t,t)^2$.

\subsection{Set-up}

This paper is structured as follows. In~\S\ref{sec:ill}, we illustrate the main ideas behind our approach in a simple setting. Subsequently, we collect necessary local existence and uniqueness results of solutions to~\eqref{GRD} in~\S\ref{sec:loc}. The core of the paper entails the global analysis of solutions to~\eqref{GRD} with small initial data, which culminates in the proof of Theorem~\ref{mainresult1} in~\S\ref{sec:proofMIX}. Finally, we provide a future outlook and discuss open problems in~\S\ref{sec:discussion}.

\section{Illustration of the main ideas} \label{sec:ill}

This section provides a short introduction to the method employed in the proof of Theorem~\ref{mainresult1}. We illustrate in a simple setting how we control the most critical nonlinear terms exploiting oscillations that arise in Fourier space due to differences in velocities. We consider the toy model
\begin{align}
\begin{split}
\partial_t u_1&= d_1\partial_{xx}u_1 + c_1 \partial_x u_1 + \left(2 \pi u_1\right)^r u_2 + \left(2\pi\right)^{q-1} u_2^q,\\
\partial_t u_2&= d_2\partial_{xx}u_2 + c_2 \partial_x u_2 + 2 \pi \partial_x \left(u_1^2\right) + \left(2 \pi\right)^{q-1} u_2^q,
\end{split} \qquad t \geq 0, x \in \R, \label{TOY}
\end{align}
with $r \in \Nn_{\geq 2}$, $q \in \Nn_{\geq 4}$, $d_i > 0$ and $c_i \in \R$ with $c_1 \neq c_2$. The coefficients in~\eqref{TOY} are chosen for the sake of simplicity of exposition, but their precise values are unimportant in the further analysis. Indeed, applying the Fourier transform~\eqref{defFourier} to~\eqref{TOY} yields
\begin{align}
\begin{split}
\partial_t v_1 &= -k^2d_1v_1 + c_1 \ri k v_1 + v_1^{*r} * v_2 + v_2^{*q},\\
\partial_t v_2 &= -k^2d_2v_2 + c_2 \ri k v_2 + \ri k v_1^{*2} + v_2^{*q},
\end{split} \qquad t \geq 0, k \in \R, \label{TOY2}
\end{align}
where $*$ denotes the standard convolution product. Oscillatory exponentials arise when considering each component in~\eqref{TOY} in the appropriate co-moving frame. This corresponds to the coordinate change $w(k,t) = \re^{-c_1 \ri k t} v_1(k,t)$ and $z(k,t) = \re^{-c_2 \ri k t} v_2(k,t)$ in~\eqref{TOY2}. In the new coordinates system~\eqref{TOY2} reads
\begin{align}
\begin{split}
\partial_t w(k,t) &= -k^2d_1w(k,t) +  \int_\R \re^{(c_2 - c_1) \ri l t} w^{*r}(k-l,t) z(l,t) \de l + \re^{(c_2-c_1) \ri k t}z^{*q}(k,t),\\
\partial_t z(k,t) &= -k^2d_2z(k,t) +  \re^{(c_1-c_2) \ri k t} \ri k w^{*2}(k,t) + z^{*q}(k,t),
\end{split} \label{TOY3}
\end{align}
with $t \geq 0$ and $k \in \R$. We observe that, due to the difference in velocities, oscillatory exponentials arise in front of all nonlinear coupling terms, i.e.~in front of all terms with a $z$-contribution in the $w$-equation or terms with a $w$-contribution in the $z$-equation. The additional temporal decay induced by the oscillations can be revealed by integrating by parts in time or in frequency in the Duhamel formulation of~\eqref{TOY3}.

We take small initial data $(w_0,z_0) \in \smash{W^{1,1}_1}(\R,\C^2)$ to~\eqref{TOY3} satisfying $\|(w_0,z_0)\|_{\smash{W^{1,1}_1}} \leq \delta \ll 1$. We assume local existence and uniqueness of a continuous mild solution $(w(t),z(t))$ in $\smash{W^{1,1}_1}(\R,\C^2)$ to~\eqref{TOY3} with initial condition $(w_0,z_0)$ on some maximal time interval $[0,T)$ with $T \in (0,\infty]$, so that, if $T < \infty$, the $W^{1,1}_1$-norm of $(w(t),z(t))$ blows up as $t \uparrow T$. Thus, appropriate iterative estimates on the components
\begin{align*}
\left\||\cdot|^j \partial_k^m w(t)\right\|_1, \left\||\cdot|^j \partial_k^m z(t)\right\|_1, \qquad j,m = 0,1,
\end{align*}
of the $W^{1,1}_1$-norm of the solution prove that such blow-up cannot occur and yield global existence and decay, see~\S\ref{sec:planM} for more details. Such estimates can be obtained through the Duhamel formulation (or variation of constants formula) corresponding to~\eqref{TOY3}, which is given by
\begin{align}
\begin{split}
w(k,t) &= \re^{-d_1 k^2 t} w_0(k) + \int_0^t \int_\R \re^{-k^2 d_1(t-s) + (c_2 - c_1) \ri l s} w^{*r}(k-l,s) z(l,s) \de l \de s\\
&\qquad \qquad \qquad \qquad \qquad \qquad + \int_0^t \re^{-k^2 d_1(t-s) + (c_2-c_1)\ri k s} z^{*q}(k,s) \de s,\\
z(k,t) &= \re^{-d_2 k^2 t} z_0(k) + \int_0^t \ri k \re^{-k^2 d_2(t-s) + (c_1-c_2) \ri k s} w^{*2}(k,s) \de s+ \int_0^t \re^{-k^2 d_2(t-s)} z^{*q}(k,s) \de s,
\end{split}
\label{Duhamelclas}
\end{align}
for $k \in \R$ and $t \in [0,T)$.

It is not hard, cf.~\S\ref{sec:lin}, to establish the estimate
\begin{align*}
\int_\R \left|k^j \partial_k^m \re^{-d_1 k^2 s} w(k)\right| \de k \leq C\frac{\|w\|_{\smash{W^{1,1}_1}}}{(1+s)^{\frac{1+j-m}{2}}}, \qquad w \in W^{1,1}_1(\R,\C^2),\, j,m = 0,1, \, s \geq 0,
\end{align*}
where $C\geq1$ is some $s$-independent constant. Therefore, if the nonlinear terms in~\eqref{TOY3} were absent, the solution $(w(s),z(s))$ would decay as
\begin{align} \left\||\cdot|^j \partial_k^m w(s)\right\|_1, \left\||\cdot|^j \partial_k^m z(s)\right\|_1 \leq \frac{C\delta}{(1+s)^{\frac{1+j-m}{2}}}, \qquad j,m = 0,1, \, s \geq 0. \label{assump} \end{align}

The general idea of a nonlinear iteration scheme is to employ the bounds~\eqref{assump} on the linear terms in~\eqref{Duhamelclas} to obtain estimates on the nonlinear terms in~\eqref{Duhamelclas}. To illustrate this principle, let us bound the last integral in the $w$-component of~\eqref{Duhamelclas}, which corresponds to an irrelevant nonlinearity. Thus, take $t \in [0,T)$ and assume~\eqref{assump} holds for all $s \in [0,t)$. Using Young's convolution inequality, the fact that $W^{1,1}(\R,\C)$ is continuously embedded in $L^\infty(\R,\C)$ by the fundamental theorem of calculus and the fact that $q \geq 4$, we obtain for $j = 0,1$ the estimate
\begin{align}
\begin{split}
&\int_\R \left|k^j \partial_k \int_0^t \re^{-k^2 d_1(t-s) + (c_2-c_1) \ri k s} z^{*q}(k,s) \de s\right| \de k \\
&\ \ \leq C\left(\int_0^t \int_\R \left|k^{j+1} (t-s) \re^{-k^2 d_1(t-s)} z^{*q}(k,s)\right| \de k \de s + \int_0^t \int_\R \left|k^j s \re^{-k^2 d_1(t-s)} z^{*q}(k,s) \right|\de k \de s\right.\\
& \qquad \qquad \qquad \left. +\ \int_0^t \int_\R \left|k^j \re^{-k^2 d_1(t-s)} \partial_k \left(z^{*q}(k,s)\right) \right|\de k \de s\right)\\
&\ \ \leq C\left(\int_0^t \int_\R k^{j+1} (t-s) \re^{-k^2 d_1(t-s)} \de k \left\|z(s)\right\|_\infty\left\|z(s)\right\|_1^{q-1} \de s \phantom{ \left(\int_\R \left|k^j \re^{-k^2 d_1(t-s)}\right|^2\de k\right)^{\frac{1}{2}}}\right. \\
&\qquad \qquad \qquad \left. +\, \int_0^t \left(\int_\R \left|k^j \re^{-k^2 d_1(t-s)}\right|^2\de k\right)^{\frac{1}{2}} \left\|z(s)\right\|^{\frac{1}{2}}_\infty \left( s \left\|z(s)\right\|_1^{q-\frac{1}{2}} + \left\|z(s)\right\|_1^{q-\frac{3}{2}} \left\|\partial_k z(s)\right\|_1\right) \de s \right)\\
&\ \ \leq C\delta^2 \left(\int_0^t \frac{1}{(t-s)^{\frac{j}{2}} (1+s)^{\frac{q-1}{2}}} \de s + \int_0^t \frac{1}{(t-s)^{\frac{1+2j}{4}} (1+s)^{\frac{2q-5}{4}}} \de s\right) \leq C\frac{\delta^2}{(1+t)^{\frac{j}{2}}},
\end{split} \label{irrill2}
\end{align}
and
\begin{align}
\begin{split}
&\int_\R \left|k^j \int_0^t \re^{-k^2 d_1(t-s) + (c_2-c_1) \ri k s} z^{*q}(k,s) \de s\right| \de k\\
&\qquad \leq C\left(\int_0^{\frac{t}{2}} \int_\R k^j \re^{-k^2 d_1(t-s)} \de k \left\|z(s)\right\|_\infty\left\|z(s)\right\|_1^{q-1} \de s + \int_{\frac{t}{2}}^t \sup_{k \in \R} \left(k^j \re^{-k^2 d_1(t-s)}\right) \left\|z(s)\right\|_1^q \de s\right)\\
&\qquad \leq C\delta^2 \left(\int_0^{\frac{t}{2}} \frac{1}{(t-s)^{\frac{1+j}{2}} (1+s)^{\frac{q-1}{2}}} \de s + \int_{\frac{t}{2}}^t \frac{1}{(t-s)^{\frac{j}{2}} (1+s)^{\frac{q}{2}}} \de s \right) \leq C\frac{\delta^2}{(1+t)^{\frac{1+j}{2}}}.
\end{split} \label{irrill1}
\end{align}
where we denote by $C \geq 1$ any $t$-independent constant. Hence, we conclude that the last integral in the $z$-component exhibits those decay properties as one would expect from the linear dynamics~\eqref{assump}.

To close the nonlinear iteration scheme, we need to obtain similar estimates on the other nonlinear terms in the Duhamel formulation~\eqref{Duhamelclas}. To obtain estimate~\eqref{irrill1} one readily observes that it was crucial that $q > 3$, whereas for estimate~\eqref{irrill2} we needed $q \geq 4$. So, we cannot expect that a similar procedure works to bound those integrals in~\eqref{Duhamelclas}, which correspond to the marginal nonlinear terms $u_1^ru_2$ and $\partial_x (u_2^2)$ in~\eqref{TOY}. We explain below how to bound such integrals by either integrating by parts in time or in frequency.

\subsection{Integration by parts in frequency} \label{sec:illfreq}

Take $t \in [0,T)$ and let us consider the integral
\begin{align*}
I_m(k,t) := \int_0^t \int_\R \re^{-k^2 d_1(t-s) + (c_2 - c_1) \ri l s} &w^{*r}(k-l,s) z(l,s) \de l \de s, \qquad k \in \R,
\end{align*}
in~\eqref{Duhamelclas} corresponding to the marginal mixed-term $u_1^r u_2$ in the $u_1$-equation in~\eqref{TOY}. To avoid singularities in time, we split the domain of integration in a part from $0$ to $1$, which can be bounded as in~\eqref{irrill1}, and a more problematic part from $1$ to $t$. To gain additional temporal decay in the second integral for $t \geq 2$, we integrate by parts in frequency and use that $w(s), z(s) \in \smash{W^{1,1}_1(\R,\C)}$ are localized for $s \in [0,t]$, to obtain
\begin{align}
\begin{split}
\int_1^t \int_\R &\re^{-k^2 d_1(t-s) + (c_2 - c_1) \ri l s} w^{*r}(k-l,s) z(l,s) \de l \de s\\
&\qquad = -\int_1^t \int_\R \frac{\re^{-k^2 d_1(t-s) + (c_2 - c_1) \ri l s} \partial_l\left(w^{*r}(k-l,s) z(l,s)\right)}{(c_2-c_1)\ri s} \de l \de s,
\end{split} \qquad k \in \R.
\label{intspace0}
\end{align}
By assuming~\eqref{assump}, identity~\eqref{intspace0} leads for $t \geq 2$ and $j = 0,1$ to the bound
\begin{align*}
&\int_\R \left|k^j \int_1^t \int_\R \re^{-k^2 d_1(t-s) + (c_2 - c_1) \ri l s} w^{*r}(k-l,s) z(l,s) \de l \de s\right| \de k\\
&\qquad \leq C\left(\int_1^{\frac{t}{2}} \int_\R k^j \re^{-k^2 d_1(t-s)} \de k \, s^{-1} \left\|w(s)\right\|_\infty\|w(s)\|_1^{r-2}\left(\left\|w(s)\right\|_1\left\|\partial_k z(s)\right\|_1 + \left\|\partial_k w(s)\right\|_1\left\|z(s)\right\|_1\right) \de s\right.\\
&\qquad \qquad \qquad \left. +\, \int_{\frac{t}{2}}^t \sup_{k \in \R} \left(k^j \re^{-k^2 d_1(t-s)}\right) s^{-1} \left\|w(s)\right\|_1^{r-1} \left(\left\|w(s)\right\|_1\left\|\partial_k z(s)\right\|_1 + \left\|\partial_k w(s)\right\|_1\left\|z(s)\right\|_1\right) \de s\right)\\
&\qquad \leq C\left(\int_1^{\frac{t}{2}} \frac{1}{s(t-s)^{\frac{1+j}{2}} (1+s)^{\frac{r-1}{2}}} \de s + \int_{\frac{t}{2}}^t \frac{1}{s(t-s)^{\frac{j}{2}} (1+s)^{\frac{r}{2}}} \de s\right) \leq C\frac{\delta^2}{(1+t)^{\frac{1+j}{2}}}.
\end{align*}
Short-time bounds on $I_m(t)$ for $t \leq 2$ can then be established similarly as in~\eqref{irrill1}. In the bounds on the $k$-derivative $\partial_k I_m(t)$ the additional temporal decay obtained by integrating by parts in frequency can also be exploited. However, integrating by parts the term
\begin{align}
I_r(k,t) := \int_0^t \int_\R \re^{-k^2 d_1(t-s) + (c_2 - c_1) \ri l s} \partial_k\left(w^{*r}(k-l,s)\right) z(l,s) \de l \de s, \qquad k \in \R, \label{obstr}
\end{align}
arising in $\partial_k I_m(t)$, leads to a double derivative $\partial_k^2 w(\cdot,s)$ in the convolution product, whose $L^p$-norm is not bounded by the $W^{1,1}_1$-norm of $w(s)$ for any $p \in [1,\infty]$. Instead, we bound~\eqref{obstr} directly and avoid integrating by parts, which leads, as in~\eqref{irrill2}, for $j = 0,1$ to the bound
\begin{align}
\left\||\cdot|^j I_r(t)\right\|_1 \leq C\delta^2 \int_0^t \frac{1}{(t-s)^{\frac{1+2j}{4}} (1+s)^{\frac{2r-1}{4}}} \de s \leq C\frac{\delta^2}{(1+t)^{\frac{j}{2}}}, \label{irrill3}
\end{align}

\begin{remark}\label{quadraticmix}
{\upshape
For the last inequality in~\eqref{irrill3} to hold, and thus to close the nonlinear iteration scheme, we observe that it is crucial that $r \geq 2$. This shows that quadratic mixed-term, i.e.~the case $r = 1$, cannot be handled by the method presented in this paper. The desired bounds on quadratic mixed-terms would require integrating by parts in frequency once again in~\eqref{obstr} in order to obtain sufficient decay in $s$, which would lead to the double derivative $\partial_k^2 w(k-l,s)$. At first sight, controlling the double derivative $\partial_k^2 w(k,s)$ in the nonlinear iteration scheme seems a solution to this obstruction. However, a similar problem then occurs in bounding $\partial_k^2 I_m(t)$, which would then, after integrating by parts in frequency, require control over the third derivative $\partial_k^3 w(k-l,s)$. In fact, we would need control over \emph{all} $k$-derivatives of $w(k,s)$ for our approach to work for quadratic mixed-terms, which would complicate the analysis and require stronger-than-polynomially localized initial data; see also~\S\ref{sec:discussion}.}
\end{remark}

\subsection{Integration by parts in time} \label{sec:time}

Take $t \in [0,T)$ and let us consider the integral
\begin{align*}
I_b(k,t) := \int_0^t \ri k \re^{-k^2 d_2(t-s) + (c_1-c_2) \ri k s} w^{*2}(k,s) \de s = 2\int_0^t \int_\R \ri \re^{-k^2 d_2(t-s) + (c_1-c_2) \ri k s} w(k-l,s) \, l w(l,s) \de l\de s,
\end{align*}
in~\eqref{Duhamelclas}, corresponding to the Burgers'-type coupling $\partial_x (u_1^2)$ in the $u_2$-equation in~\eqref{TOY}. Although $\partial_x (u_1^2)$ is a marginal nonlinearity, we can move the spatial derivative onto the semigroup and proceed as in~\eqref{irrill1} to establish for $j = 0,1$ the desired estimate
\begin{align*}
\left\||\cdot|^j I_b(t)\right\|_1 \leq C\delta^2 \left(\int_0^{\frac{t}{2}} \frac{1}{(t-s)^{1+\frac{j}{2}} \sqrt{1+s} } \de s + \int_{\frac{t}{2}}^t \frac{1}{(t-s)^{\frac{j}{2}} (1+s)^{\frac{3}{2}}} \de s\right) \leq C\frac{\eta(t)^2}{(1+t)^{\frac{1+j}{2}}}.
\end{align*}
However, the $k$-derivative of $I_b(k,t)$ contains the term
\begin{align*}
J_b(k,t) := \int_0^t k (c_2-c_1) \re^{-k^2 d_2(t-s) + (c_1-c_2) \ri k s} s \, w^{*2}(k,s) \de s, \qquad k \in \R,
\end{align*}
which cannot be bounded as in~\eqref{irrill2} (we would need $w^{*3}(k,s)$ instead of $w^{*2}(k,s)$ to obtain such a bound). To establish additional temporal decay, we integrate by parts in time and find
\begin{align}
J_b(k,t) = \frac{\ri(c_2-c_1)}{d_2 k + (c_1 - c_2)\ri} \left(t\, \re^{(c_1 - c_2)\ri k t} w^{*2}(k,t) - \int_0^t \re^{-k^2 d_2(t-s) + (c_1-c_2) \ri k s} \partial_s \left(s\, w^{*2}(k,s)\right) \de s\right), \label{inttime0}
\end{align}
with $k \in \R$. We emphasize that no singularities are introduced due to the special divergence form of the Burgers'-type term, which vanishes at frequency $k = 0$ in Fourier space, i.e.~in the language of Germain, Masmoudi and Shatah, see~\S\ref{sec:spacetime}, the time resonance at $k = 0$ is canceled. We can now replace the temporal derivative~$\partial_s w^{*2}(k,s)$ in~\eqref{inttime0} using the $w$-equation in~\eqref{TOY3}. The remaining terms in~\eqref{inttime0} can now be bounded more or less in the standard way, cf.~\eqref{irrill1} and~\eqref{irrill2}. We refer to~\S\ref{sec:Burgers} for further details.

\section{Local existence and uniqueness} \label{sec:loc}

Local existence and uniqueness of classical solutions to semilinear parabolic equations is well-esta-blished for bounded, H\"older continuous initial conditions, see for instance~\cite{LUN}. We collect the necessary results for reaction-diffusion-advection systems from~\cite[Section 11.3]{ZUH}, which were obtained using the so-called parametrix method. Subsequently, we connect these results to our global estimates by establishing local control on the Fourier transform of solutions to~\eqref{GRD} in the weighted Sobolev space $W^{1,1}_1(\R,\C^n)$.

First, we observe that the method in~\cite[Section 11.3]{ZUH} is applicable to prove local existence and uniqueness of solutions to~\eqref{GRD} in the weighted Sobolev space
\begin{align*} W_1^{1,\infty}(\R,\R^n) := \left\{u \in W^{1,\infty}(\R,\R^n) : \|u\|_{\smash{W^{1,\infty}_1}} < \infty\right\},\end{align*}
which is equipped with the norm
\begin{align*} \|u\|_{W^{1,\infty}_1} = \|u\|_\infty + \|\partial_x u\|_\infty + \||\cdot| u\|_\infty + \||\cdot| \partial_x u\|_\infty.\end{align*}
Indeed, if $u(x,t)$ solves~\eqref{GRD}, then the function $U(x,t) = (u(x,t),x u(x,t))$ solves again a RDA system with H\"older continuous coefficients and sufficiently smooth nonlinearities. Subsequently, we employ a standard, but not readily available, regularity argument to prove that the Fourier transform $(\F u)(t)$ of the obtained local solution to~\eqref{GRD} in $W^{1,\infty}_1(\R,\R^n)$ exists in $W^{1,1}_1(\R,\C^n)$ and is continuous with respect to time.

All in all, we establish the following local existence result.

\begin{proposition} \label{proplocal}
Let $\alpha > 0$. Suppose that the coefficients in~\eqref{GRD} satisfy $d_i > 0$ and it holds $f \in C^{2,\alpha}(\R^n \times \R^n,\R^n)$ with $f(0,0) = Df(0,0) = 0$. Take $v_0 \in W_{1+\alpha}^{1,1}(\R,\C^n)$ satisfying the reality condition $v_0(-k) = \smash{\overline{v_0(k)}}$ for each $k \in \R$. Then, there exists $T \in (0,\infty]$ such that we have a unique classical solution $u_\ast \in C^{1,\frac{\alpha}{2}}\big((0,T),C_b^{3,\alpha}(\R,\R^n)\big)$ to~\eqref{GRD} with initial condition $u_\ast(0) = \F^{-1}(v_0)$. In addition, $v_\ast \colon [0,T) \to W^{1,1}_1(\R,\C^n)\big)$ given by $v_\ast(t) = \F(u_\ast(t))$ is continuous and $T > 0$ is maximal in the sense that, if it holds $T < \infty$, then we have
\begin{align}\limsup_{t \uparrow T} \left\|v_\ast(t)\right\|_{W^{1,1}_1} = \infty.\label{blowuplinfty}\end{align}
\end{proposition}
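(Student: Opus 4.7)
The plan is to first obtain a local classical solution in the physical-space weighted space $W^{1,\infty}_1(\R,\R^n)$ via the parametrix method applied to an augmented system, then bootstrap in Fourier space to $W^{1,1}_1(\R,\C^n)$. \textbf{Step 1 (Augmented system and parametrix).} Since $f \in C^{2,\alpha}$ vanishes to order $2$ at the origin, Hadamard's lemma furnishes matrix-valued maps $A,B \in C^{1,\alpha}(\R^n\times\R^n,\R^{n\times n})$ with $A(0,0) = B(0,0) = 0$ and $f(a,b) = A(a,b)\,a + B(a,b)\,b$. Setting $V := xu$, the identities $x\partial_{xx}u = \partial_{xx}V - 2\partial_x u$, $x\partial_x u = \partial_x V - u$ and $xf(u,\partial_x u) = A(u,\partial_x u)V + B(u,\partial_x u)(\partial_x V - u)$ yield a closed RDA system for $U = (u,V) \in \R^{2n}$, again with diagonal diffusion and drift and with $C^{1,\alpha}$-nonlinearity vanishing at the origin. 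Standard Fourier-inversion estimates show that $v_0 \in W^{1,1}_{1+\alpha}$ implies $u_0 := \F^{-1}(v_0) \in C_b^{1,\alpha}$ and $xu_0 \in C_b^{1,\alpha}$, so $U(0) \in C_b^{1,\alpha}(\R,\R^{2n})$ and is real-valued by the reality condition. Applying the parametrix method of~[ZUH, Sec.~11.3] to the augmented system produces a unique classical $U_\ast \in C^{1,\alpha/2}\bigl((0,T), C_b^{3,\alpha}(\R,\R^{2n})\bigr)$ on a maximal interval $[0,T)$, whose first block $u_\ast$ solves \eqref{GRD}, and whose second block equals $xu_\ast$, so that $u_\ast, \partial_x u_\ast, xu_\ast, x\partial_x u_\ast$ all lie in $C_b^{3,\alpha}$ on compact sub-intervals of $(0,T)$; in particular $u_\ast(t) \in W^{1,\infty}_1$ for each $t \in [0,T)$.

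\textbf{Step 2 (Fourier-side regularity).} I would work from the Fourier-space Duhamel formula
\begin{align*}
v_\ast(k,t) = \re^{(-\D k^2 + \Ce \ri k)t} v_0(k) + \int_0^t \re^{(-\D k^2 + \Ce \ri k)(t-s)} \F\bigl[f(u_\ast, \partial_x u_\ast)\bigr](k,s) \, \de s.
\end{align*}
Pointwise multiplication by $\re^{(-\D k^2 + \Ce \ri k)t}$ maps $W^{1,1}_1$ into itself with operator norm polynomial in $t$ (the $\partial_k$-commutator produces a factor of $k$ absorbed by the Gaussian), so the linear term is $W^{1,1}_1$-valued and continuous on $[0,T)$. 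For the nonlinear term, set $g(s) := f(u_\ast,\partial_x u_\ast)(s)$. Using the uniform decay $|\partial_x^j u_\ast(x,s)| \le C/(1+|x|)$ for $j = 0,1,2$ from Step~1, together with the fact that $f$ vanishes to second order at the origin, one derives pointwise bounds $|\partial_x^j(x^\ell g(\cdot,s))| \le C/(1+|x|)^{2-\ell}$ for $j = 0,1,2$ and $\ell = 0,1$ on compact sub-intervals of $(0,T)$. These place both $g(s)$ and $xg(s)$ in $H^2(\R,\R^n)$, and the Cauchy--Schwarz estimate $\|\F h\|_1 \le \|(1+k^2)^{-1}\|_{L^2} \|h\|_{H^2}$ applied to $h \in \{g, \partial_x g, xg, \partial_x(xg)\}$ then yields $\F g(s) \in W^{1,1}_1$ with $s$-continuous norm on $(0,T)$. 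The Bochner integral of $\re^{(-\D k^2 + \Ce \ri k)(t-s)} \F g(k,s)$ converges in $W^{1,1}_1$ and depends continuously on $t \in (0,T)$; continuity at $t = 0$ is recovered from strong continuity of the diagonal multiplier semigroup on $W^{1,1}_{1+\alpha} \hookrightarrow W^{1,1}_1$ together with vanishing of the Duhamel integral as $t \to 0^+$, once the parabolic smoothing estimates implicit in the parametrix are used to control the potentially singular behaviour of $\|g(s)\|_{H^2}$ near $s = 0$.

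\textbf{Step 3 (Uniqueness and blow-up alternative).} Uniqueness in the stated class follows from uniqueness of $U_\ast$ in the augmented parametrix framework, since every classical solution of \eqref{GRD} with initial datum $\F^{-1}(v_0)$ lifts to a solution of the augmented system via $u \mapsto (u, xu)$. For~\eqref{blowuplinfty}, the elementary inequality $\|u\|_{W^{1,\infty}_1} \le C \|\F u\|_{W^{1,1}_1}$, derived from $\partial_k \F u = -\ri\F(xu)$, $k\F u = -\ri\F(\partial_x u)$ and Hausdorff--Young, shows that if $T < \infty$ and $\|v_\ast(t)\|_{W^{1,1}_1}$ remained bounded up to $T$, then so would $\|u_\ast(t)\|_{W^{1,\infty}_1}$, contradicting maximality of $U_\ast$. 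The hardest piece is Step~2: matching the function spaces so that $\F g(s)$ sits in $W^{1,1}_1$ with enough $s$-continuity, especially near $t = 0$ where $u_\ast$ has only $C^{1,\alpha}_b$-regularity, is the bulk of the technical work.
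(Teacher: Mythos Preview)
Your overall architecture matches the paper's, but there are two coupled gaps that prevent the argument from closing.

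First, the $2n$-augmentation $(u,xu)$ is not enough. Applying the parametrix result [ZUH, Cor.~11.4] to your augmented system yields only $(u_\ast, xu_\ast) \in C^{2,\alpha}_b$ for $t>0$, not $C^{3,\alpha}_b$ as you assert; in particular you cannot conclude that $\partial_x u_\ast, x\partial_x u_\ast \in C^{3,\alpha}_b$, since these are not components of your augmented unknown at all. The proposition requires $u_\ast \in C^{3,\alpha}_b$, and the paper obtains this by passing to a $4n$-component system that also carries $w=\partial_x u$ and $q=x\partial_x u$, with all nonlinear derivative terms rewritten in divergence form; then $w_\ast \in C^{2,\alpha}_b$ from the parametrix gives $u_\ast \in C^{3,\alpha}_b$ for free, together with $\partial_x^j u_\ast = O(1/|x|)$ for $j\le 3$.

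Second, your Step~2 route through $H^2$ demands more derivatives than either augmentation supplies. The pointwise bound you claim on $\partial_x^2 g$ already involves $\partial_x^3 u_\ast$, which your $2n$-system does not control; and applying the Cauchy--Schwarz estimate to $h=\partial_x g$ as you write it requires $\partial_x g \in H^2$, i.e.\ control of $\partial_x^4 u_\ast$, which is unavailable even from the paper's $4n$-system. The paper sidesteps this entirely: it only needs $\F[g(s)] \in H^1$, equivalently $g(s), xg(s) \in L^2$, and this follows directly from $u_\ast(s)\in W^{1,\infty}_1$ together with the quadratic vanishing of $f$ via Taylor's theorem---no higher derivatives of $u_\ast$ required. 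The passage from $L^2$ bounds on the nonlinearity to weighted $L^1$ bounds on the Duhamel integral is then effected by pairing against the Gaussian factor $\re^{-\D k^2(t-s)}$ through H\"older's inequality in $k$, rather than by forcing $\F g$ itself into $L^1$. This is the device that makes Step~2 go through with only $W^{1,\infty}_1$ control on $u_\ast$.
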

\begin{proof}
Upon setting $w(x) = \partial_x u(x)$, $p(x) = xu(x)$ and $q(x) = x\partial_x u(x)$, we rewrite~\eqref{GRD} as the $4n$-component system
\begin{align}
\begin{split}
\partial_t u&= \D \partial_{xx} u + \Ce \partial_x u + f\left(u,w\right), \\
\partial_t w&= \D \partial_{xx} w + \Ce \partial_x w + \partial_x \left(f(u,w)\right), \\
\partial_t p&= \D \partial_{xx} p + \Ce \partial_x p - 2 \D \partial_x u - \Ce u + x f\left(u,w\right), \\
\partial_t q&= \D \partial_{xx} q + \Ce \partial_x q - 2 \D \partial_x w - \Ce w + \partial_x \left(x f(u,w)\right) - f(u,w),
\end{split}
\qquad t \geq 0, x \in \R, \label{GRD2}
\end{align}
so that all nonlinear terms with derivatives are in divergence form and the coefficients and nonlinearity are $C^{2,\alpha}$-functions of $x$ and $(u,w,p,q)$. The relevant initial condition to~\eqref{GRD2} is
\begin{align*}U_0 := \left(u_0, \partial_x u_0, \rho u_0, \rho \partial_x u_0\right),\end{align*}
with $u_0 := \F^{-1}(v_0)$ and $\rho \colon \R \to \R$ given by $\rho(x) = x$. By~\cite[Proposition 5.2]{CAFF} there exists a constant $C \geq 1$ such that
\begin{align*} \|\rho^j u_0\|_{C^{1,\alpha}} &\leq C\left(\left\|(-\Delta)^{\frac{1+\alpha}{2}}\left(\rho^j u_0\right)\right\|_\infty + \|\rho^j u_0\|_\infty\right) \\
&\leq C\left(\left\||\cdot|^{1+\alpha} \partial_k^j v_0\right\|_1 + \left\|\partial_k^j v_0\right\|_1\right) \leq C\|v_0\|_{W^{1,1}_{1 + \alpha}},
\end{align*}
for $j = 0,1$. So, it holds $U_0 \in C_b^{0,\alpha}(\R,\R^{4n})$.

Thus, by~\cite[Corollary 11.4]{ZUH} and its proof, there exists a unique solution
\begin{align} U_\ast(x,t) = (u_\ast,w_\ast,p_\ast,q_\ast)(x,t), \quad U_\ast \in C^{0,\frac{\alpha}{2}}\big([0,T),C_b^{0,\alpha}(\R,\R^{4n})\big) \cap C^{1,\frac{\alpha}{2}}\big((0,T),C_b^{2,\alpha}(\R,\R^{4n})\big), \label{regularity}\end{align}
to~\eqref{GRD2} on a maximal interval $[0,T)$, with $T \in (0,\infty]$, having initial condition $U_0 \in C_b^{0,\alpha}(\R,\R^{4n})$. It is not difficult to verify that, by uniqueness of solutions, it must hold $w_\ast(x,t) = \partial_x u_\ast(x,t), p_\ast(x,t) = x u_\ast(x,t)$ and $q_\ast(x,t) = x w_\ast(x,t)$ for each $x \in \R$ and $t \in [0,T)$. So, on the one hand,~\eqref{regularity} entails that we have established a classical solution $u_\ast \in C^{1,\frac{\alpha}{2}}\big((0,T),\smash{C_b^{3,\alpha}}(\R,\R^n)\big)$ to~\eqref{GRD} with initial condition $u_\ast(0) = u_0$. On the other hand,~\eqref{regularity} also implies $u_\ast \in C^0\big([0,T),\smash{W_1^{1,\infty}}(\R,\R^n)\big)$.

Note that $\smash{W^{1,\infty}_1}(\R,\R^n)$ is continuously embedded in the Sobolev space $H^1(\R,\R^n)$. Hence, the Fourier transform maps $\smash{W^{1,\infty}_1}(\R,\R^n)$ continuously into the weighted $L^2$-space
\begin{align*}L^2_1(\R,\C^n) := \left\{v \in L^2(\R,\C^n) : \|v\|_{\smash{L_1^2}} < \infty\right\},\end{align*}
which is equipped with the norm $\|v\|_{\smash{L^2_1}} = \|(1+|\cdot|^2)^{1/2} v\|_2$, where $\|\cdot\|_2$ denotes the $L^2$-norm. The range of $\F$ in $L^2_1(\R,\C^n)$ is given by the subspace
\begin{align*} X := \left\{\F(u) \in L^2_1(\R,\C^n) : u \in W^{1,\infty}_1(\R,\R^n)\right\}. \end{align*}
Thus, the map $v_\ast \colon [0,T) \to X$ given by $v_\ast(t) = \F(u_\ast(t))$ is well-defined.

Fix $t \in [0,T)$. We prove that $v_\ast(t)$ lies in fact in $W^{1,1}_1(\R,\C^n)$. We denote by $C \geq 1$ any constant, which is only dependent on $n, \D$ and $\Ce$. We integrate~\eqref{GRD} and apply the Fourier transform to arrive at the Duhamel formulation:
\begin{align} v_\ast(k,t) = \re^{\left(-k^2 \D + \Ce \ri k\right)t} v_0(k) + \int_0^{t} \re^{\left(-k^2 \D + \Ce \ri k\right)(t-s)} \N(v_\ast(s))(k) \de s, \qquad k \in \R, \label{Duhamel1}\end{align}
where $\N \colon X \to H^1(\R,\C^n)$ is the nonlinear operator
\begin{align*} \N(v)(k) = \F \left[f\left(\F^{-1} v, \partial_x \F^{-1} v\right)\right](k).\end{align*}
We note that $\N$ is well-defined, because, by Taylor's Theorem and the fact that $f(0,0) = Df(0,0) = 0$, it holds
\begin{align}
\begin{split}
\left\|\partial_k^j \N(\F(u))\right\|_2 &\leq C\left\||\cdot|^j f\left(u, \partial_x u\right)\right\|_2 \\
&\leq C\left\|u\right\|_{H^1} \left\|u\right\|_{W^{1,\infty}_1} \sup_{\begin{smallmatrix} (v,w) \in \R^n \times \R^n \\ \|v\|,\|w\| \leq \|u\|_{\smash{W^{1,\infty}_1}}\end{smallmatrix}} \left\|D^2f\left(v,w\right)\right\|, \end{split}\label{Taylors}
\end{align}
for $u \in W^{1,\infty}_1(\R,\R^n)$ and $j = 0,1$. In fact, since $u_\ast \colon [0,T) \to W_1^{1,\infty}(\R,\R^n)$ is continuous, the nonlinear map $N_\ast \colon [0,t] \to H^1(\R,\C^n)$ given by $N_\ast(s)(k) = \N(v_\ast(s))(k)$ is bounded. On the one hand, for $j = 0,1$ we have
\begin{align}
\begin{split}
\int_\R \left\|(1+|k|) \partial_k^j \re^{\left(-k^2 \D + \Ce \ri k\right)t} v_0(k)\right\| \de k
&\leq C\left(\left\|(1+|\cdot|)\partial_k^j v_0\right\|_1 + \left(\sqrt{t} + t\right)\|(1+|\cdot|)v_0\|_1\right)\\
&\leq C\left(1+t\right)\|v_0\|_{\smash{W^{1,1}_1}}.\end{split} \label{linearLoc}
\end{align}
On the other hand, given a bounded map $N \colon [0,t] \to H^1(\R,\C^n)$ and $j = 0,1$, we use H\"older's inequality and the fact that $x \mapsto p(x)\re^{-d x^2}$ is bounded on $\R$ for any polynomial $p \colon \R \to \R$ and $d > 0$ to yield
\begin{align}
\begin{split}
\int_\R &\int_0^{t} \left\|(1+|k|) \partial_k^j \re^{\left(-k^2 \D + \Ce \ri k\right)(t-s)} N(s)(k)\right\| \de s \de k\\
&\quad  \leq C \int_\R \int_0^{t} (1+|k|)\left\|\re^{\left(-k^2 \D\right)(t-s)}\right\| \left[(|k|(t-s) + (t-s))^j \left\|N(s)(k)\right\| + \left\|\partial_k N(s)(k)\right\|^j\right] \de s \de k\\
&\quad  \leq C \sup_{s \in [0,t]} \|N(s)\|_{H^1} \int_0^t \left(1 + t - s\right)\left\|(1+|\cdot|)\re^{-\frac{1}{2}(\cdot)^2 \D(t-s)}\right\|_2 \de s\\
&\quad  \leq C \sup_{s \in [0,t]} \|N(s)\|_{H^1} \int_0^t \frac{1 + t - s }{(t-s)^{\frac{1}{4}}} \left(1 + \frac{1}{\sqrt{t-s}}\right) \de s \\
&\quad  \leq C\left(1 + t^{\frac{7}{4}}\right) \sup_{s \in [0,t]} \|N(s)\|_{H^1}.
\end{split} \label{nonlinearLoc}
\end{align}
Hence, by~\eqref{Duhamel1},~\eqref{Taylors},~\eqref{linearLoc} and~\eqref{nonlinearLoc}, it holds $v_\ast(t) \in W^{1,1}_1(\R,\C^n)$ for each $t \in [0,T)$.

Next, we prove that $v_\ast \colon [0,T) \to W^{1,1}_1(\R,\C^n)$ is continuous. Fix $T_0 \in (0,T)$. It is sufficient to prove that $v_\ast$ is H\"older continuous on $[0,T_0]$. Take $s,t \in [0,T_0]$ with $s \leq t$. By~\eqref{Duhamel1} we have
\begin{align}
\begin{split}
v_\ast(k,t) - v_\ast(k,s) &= \int_s^t \left(-k^2 \D + \Ce \ri k\right)\re^{\left(-k^2 \D + \Ce \ri k\right)r} \de r v_0(k) + \int_s^t \re^{\left(-k^2 \D + \Ce \ri k\right)(t-r)} \N(v_\ast(r))(k) \de r\\
&\qquad  + \int_0^s \int_{s-r}^{t-r} \left(-k^2 \D + \Ce \ri k\right)\re^{\left(-k^2 \D + \Ce \ri k\right)\tau} \de \tau \N(v_\ast(r))(k) \de r,\end{split} \label{Duhamel2}\end{align}
for $k \in \R$. We denote by $C \geq 1$ any constant, which is only dependent on $n, \D, \Ce$ and $T_0$. On the one hand, for $j = 0,1$ we have
\begin{align}
\begin{split}
&\int_\R \int_s^t \left\|(1+|k|) \partial_k^j \left(-k^2 \D + \Ce \ri k\right) \re^{\left(-k^2 \D + \Ce \ri k\right)r} v_0(k)\right\| \de r\de k\\
&\qquad \leq C\int_s^t \left(\left\|(1+|\cdot|)^{1+\alpha} \partial_k^j v_0\right\|_1 \left\|(1 + |\cdot|)^{2-\alpha} \re^{-(\cdot)^2 \D r}\right\|_\infty\right. \\
&\qquad \qquad \qquad \qquad + \left. \left\|(1+|\cdot|) v_0\right\|_1 \left\|(1 + |\cdot|) \left(1 + (1 + |\cdot|)^2 r\right) \re^{-(\cdot)^2 \D r}\right\|_\infty \right)\de r\\
&\qquad \leq C  \|v_0\|_{\smash{W^{1,1}_{1+\alpha}}} \int_s^t r^{\frac{\alpha}{2}-1} \de r
\leq C  \|v_0\|_{\smash{W^{1,1}_{1+\alpha}}} \left(t^{\frac{\alpha}{2}} - s^{\frac{\alpha}{2}}\right), \end{split} \label{linearLoc1}
\end{align}
where we use $r \leq T_0$ for $r \in [s,t]$ to bound the integrand. On the other hand, given a bounded map $N \colon [0,T_0] \to H^1(\R,\C^n)$, we establish, as in~\eqref{nonlinearLoc}, the estimate
\begin{align}
\begin{split}
\int_\R \int_s^{t} &\left\|(1+|k|) \partial_k^j \re^{\left(-k^2 \D + \Ce \ri k\right)(t-r)} N(r)(k)\right\| \de r \de k\\
& \qquad \qquad \leq C \sup_{r \in [0,T_0]} \|N(r)\|_{H^1} \int_s^t \left(1 + t - r\right)\left\|(1+|\cdot|)\re^{-\frac{1}{2}(\cdot)^2 \D(t-r)}\right\|_2 \de r\\
& \qquad \qquad \leq C \sup_{r \in [0,T_0]} \|N(r)\|_{H^1} \int_s^t (t-r)^{-\frac{3}{4}} \de r \leq C (t-s)^{\frac{1}{4}} \sup_{r \in [0,T_0]} \|N(r)\|_{H^1},
\end{split} \label{nonlinearLoc1}
\end{align}
where we use $t-r \leq T_0$ for $r \in [s,t]$ to bound the integrand. Similarly, for $j = 0,1$ we arrive at
\begin{align}
\begin{split}
\int_\R \int_0^s \int_{s-r}^{t-r} &\left\|(1+|k|) \partial_k^j \left(-k^2 \D + \Ce \ri k\right)\re^{\left(-k^2 \D + \Ce \ri k\right)\tau} N(r)(k)\right\| \de \tau \de r \de k\\
& \qquad \qquad \leq C \sup_{r \in [0,T_0]} \|N(r)\|_{H^1} \int_0^s \int_{s-r}^{t-r}  \left\|(1+|\cdot|)^3\left(1+|\cdot|\tau\right)\re^{-(\cdot)^2 \D\tau}\right\|_2 \de \tau \de r\\
& \qquad \qquad \leq C \sup_{r \in [0,T_0]} \|N(r)\|_{H^1} \int_0^s \int_{s-r}^{t-r}  \tau^{-\frac{7}{4}} \de \tau \de r \leq C \left(t^{\frac{1}{4}} - s^{\frac{1}{4}}\right) \sup_{r \in [0,T_0]} \|N(r)\|_{H^1},
\end{split} \label{nonlinearLoc2}
\end{align}
where we use $\tau \leq T_0$ for $\tau \in [s,t]$ to bound the integrand. By~\eqref{Taylors},~\eqref{Duhamel2},~\eqref{linearLoc1},~\eqref{nonlinearLoc1} and~\eqref{nonlinearLoc2} the function $v_\ast \colon [0,T_0] \to W^{1,1}_1(\R,\C^n)$ is H\"older continuous for each $T_0 \in [0,T)$. Hence, it holds $v_\ast \in C^0\big([0,T), W^{1,1}_1(\R,\C^n)\big)$.

Finally, assume by contradiction that $T < \infty$ and~\eqref{blowuplinfty} is false, so that $t \mapsto \|v_\ast(t)\|_{\smash{W^{1,1}_1}}$ is bounded on $[0,T)$. Then, since the inverse Fourier transform maps $\smash{W^{1,1}_1(\R,\C^n)}$ continuously into $\smash{W^{1,\infty}_1(\R,\C^n)}$ and we have $v_\ast(t) = \F(u_\ast(t))$ for each $t \in [0,T)$, we find that the solution $U_\ast(x,t)$ to~\eqref{GRD} is bounded on $[0,T) \times \R$. As $f(0,0) = 0$, one observes that $U_\ast(x,t)$ satisfies the parabolic linear system
\begin{align}
\partial_t U = D \partial_{xx} U + \partial_x (G(x,t) U) + F(x,t) U, \label{linsys}
\end{align}
with $D := \text{diag}(\D,\D,\D,\D)$ and $F, G \colon [0,T) \times \R \to \R^{4n \times 4n}$ are given by
\begin{align*}
G(x,t) &:= \int_0^1 \begin{pmatrix} \Ce & 0 & 0 & 0 \\ f^u(\gamma,x,t) & f^w(\gamma,x,t) + \Ce & 0 & 0 \\ - 2\D & 0 & \Ce & 0 \\ x f^u(\gamma,x,t) & x f^w(\gamma,x,t) - 2\D & 0 & \Ce \end{pmatrix} \de\gamma,\\  F(x,t) &:= \int_0^1 \begin{pmatrix} f^u(\gamma,x,t) & f^w(\gamma,x,t) & 0 & 0 \\ 0 & 0 & 0 & 0 \\ x f^u(\gamma,x,t) - \Ce & x f^w(\gamma,x,t) & 0 & 0 \\ -f^u(\gamma,x,t) & -f^w(\gamma,x,t) - \Ce & 0 & 0 \end{pmatrix} \de\gamma,
\end{align*}
where we denote
\begin{align*}
f^u(\gamma,x,t) := \partial_u f(\gamma u_\ast(x,t),\gamma w_\ast(x,t)), \qquad f^w(\gamma,x,t) := \partial_w f(\gamma u_\ast(x,t),\gamma w_\ast(x,t)).
\end{align*}
Since $U_\ast$ is bounded on $[0,T) \times \R$ and it holds $f(0,0) = 0$, it follows by the mean value theorem that the functions $F$ and $G$ are bounded on $[0,T) \times \R$ too. In addition, $F$ and $G$ are $\frac{\alpha}{2}$-H\"older continuous in $t$ and $\alpha$-H\"older continuous in $x$, since the same holds for $U_\ast$. 
Thus, by~\cite[Proposition 11.3]{ZUH} the Green's function $G(x,y,t,s)$ associated to~\eqref{linsys} is continuous, and differentiable with respect to $x$. Moreover, it enjoys the estimate
\begin{align} \left\|\partial_x^j G(x,y,t,s)\right\| \leq Ct^{-\tfrac{j+1}{2}} \re^{-\tfrac{(x-y)^2}{M(t-s)}}, \qquad x,y \in \R, 0 < s \leq t < T, j = 0,1, \label{Green}\end{align}
for some $x$-, $y$-, $s$- and $t$-independent constants $C,M > 1$. Let $T_0 \in (0,T)$. The Green's function estimate~\eqref{Green} and the fact that $U_\ast$ is bounded on $[0,T)$ imply that the solution
\begin{align*} U_\ast(x,t) = \int_\R G(x,y,t,T_0) U_\ast(y,T_0)\de y, \end{align*}
can be extended from $\R \times [T_0,T)$ to $\R \times [T_0,T]$ such that $U_\ast(\cdot,T) \in C_b^1(\R,\R^n)$. In particular, $U_\ast(\cdot,T)$ lies in $C_b^{0,\alpha}(\R,\R^{4n})$ and can therefore be extended by~\cite[Corollary 11.4]{ZUH} to a solution $U_\ast(t)$ in $C_b^{0,\alpha}(\R,\R^{4n})$ on some interval $[0,T+\tau)$ with $\tau > 0$, which contradicts the maximality of $T$. Thus, the blow-up~\eqref{blowuplinfty} must hold if $T < \infty$.
\end{proof}

\begin{remark} \label{remspaces}
{\upshape
We have established local existence of classical solutions to~\eqref{GRD} with initial data in the range $X_\alpha := \{\F^{-1}(v) : v \in W^{1,1}_{1+\alpha}(\R,\C^n), v(-k) = \overline{v(k)} \text{ for all } k \in \R\} \subset L^2(\R,\R^n)$ of the inverse Fourier transform restricted to all $v \in \smash{W^{1,1}_{1+\alpha}(\R,\C^n)}$ satisfying the reality condition. We note that the more `natural' algebraically weighted Sobolev space
\begin{align*}H^2_1(\R,\R^n) := \left\{u \in H^2(\R,\R^n) : \|u\|_{\smash{H^2_1}} < \infty\right\}, \end{align*}
equipped with the norm $\|u\|_{\smash{H^2_1}} = \left\|\varrho u\right\|_{H^2}$, where $\varrho \colon \R \to \R$ denotes the smooth algebraic weight $\varrho(x) = (1+x^2)^{1/2}$, is continuously embedded in the space $X_\alpha$. Of course, initial data in $H^2_1(\R,\R^n)$ are in general more regular and stronger localized than initial data in $X_\alpha$. This can be seen by looking at weighted fractional Sobolev spaces. The standard fractional Sobolev spaces $W^{s,p}(\R,\R^n)$ for $s \in \R_{> 0} \setminus \Nn$ and $p \in (1,\infty)$ are defined by
\begin{align*}
W^{s,p}(\R,\R^n) = \left\{u \in W^{\lfloor s \rfloor,p}(\R,\R^n) : [u]_{s - \lfloor s \rfloor,p} < \infty\right\}, \quad [u]_{\theta,p} := \left(\int_\R \int_\R \frac{\left\|D^{\lfloor s \rfloor}\left(u(x) - u(y)\right)\right\|^p}{|x-y|^{\theta p+1}} \de x \de y\right)^{\frac{1}{p}},
\end{align*}
and are equipped with the Slobodeckij norm $\|u\|_{W^{s,p}} = \|u\|_{W^{\lfloor s \rfloor,p}} + [u]_{s-\lfloor s \rfloor,p}$ or the equivalent Bessel norm $\|u\|_{H^{s,p}} = \left\|\left(1-\Delta\right)^{\frac{s}{2}}u\right\|_p$, where the fractional operator $\left(1-\Delta\right)^{\frac{s}{2}}$ corresponds to multiplication with $\varrho^s$ in Fourier space. We introduce the weighted fractional Sobolev spaces
\begin{align*} W_1^{s,p}(\R,\R^n) = \left\{u \in W^{s,p}(\R,\R^n) : \left\|\varrho^p u\right\|_{W^{s,p}} < \infty\right\}, \end{align*}
for $s \in \R_{>0} \setminus \Nn$ and $p \in (1,\infty)$. We equip $W_1^{s,p}(\R,\R^n)$ with the norm $\|u\|_{\smash{W_1^{s,p}}} = \|\varrho^p u\|_{\smash{W^{s,p}}}$. One readily observes via the H\"older and Babenko-Beckner inequalities that all spaces $\smash{W_1^{1+\alpha,p}}(\R,\R^n)$ with $\alpha p > 1$ are continuously embedded in $X_\alpha$. Thus, intuitively speaking, for initial data to lie in $X_\alpha$ for some $\alpha > 0$, it is enough to be more regular than one time differentiable and exhibit stronger decay than $1/(1+|x|)$ as $x \to \pm \infty$.}
\end{remark}

\section{Global analysis: proof of Theorem~\ref{mainresult1}} \label{sec:proofMIX}

In this proof, $C\geq1$ denotes a constant, which is independent of $\delta$ and $t$ and that will be taken larger if necessary.

\subsection{Plan of proof} \label{sec:planM}

Let $v_0 \in W_{1+\alpha}^{1,1}(\R,\C^n)$ with $v_0(-k) = \overline{v_0(k)}$ for each $k \in \R$. By Proposition~\ref{proplocal} there exists $T > 0$ such that we have a unique local solution $u \in C^{1,\frac{\alpha}{2}}\left((0,T),C_b^{3,\alpha}(\R,\R^n)\right)$ to~\eqref{GRD} with initial condition $u(0) = \F^{-1}(v_0)$. In addition, the function $v \colon [0,T) \to W^{1,1}_1(\R,\C^n)$ given by $v(t) = \F(u(t))$ is continuous and $T > 0$ is maximal in the sense that, if it holds $T < \infty$, then we have
\begin{align}\limsup_{t \uparrow T} \left\|v\right\|_{W^{1,1}_1} = \infty.\label{blowuplinfty2}\end{align}

To exploit oscillations arising in Fourier space due to differences in velocities we switch to an appropriate co-moving frame in each component. Thus, we define the new coordinate
\begin{align} w(k,t) := \Phi(k,t) v(k,t) = \Phi(k,t)\left(\F u\right)(k,t), \qquad \Phi(k,t) := \re^{-\ri \Ce kt}, \label{defw}\end{align}
for $k \in \R$ and $t \in [0,T)$. We aim to establish global control on the $W^{1,1}_1$-norm of $w(t)$. Thus, we introduce the temporal weight function $\eta \colon [0,T) \to \R$ given by
\begin{align*} \eta(t) = \sup_{s \in [0,t]} &\left[\sqrt{1+s}\|w(s)\|_1 + \frac{\sqrt{1+s}}{\ln(2+s)}\left\||\cdot| \partial_k w(s)\right\|_1 + (1+s)\left\||\cdot| w(s)\right\|_1 \right.\\
&\left.\qquad \qquad \qquad \qquad \qquad \phantom{\frac{\sqrt{1+s}}{\ln(2+s)}} + \left\|\partial_k w(s)\right\|_1 + (1+s)^{\frac{3}{4}} \left\||\cdot| w(s)\right\|_2 \right].
\end{align*}
We show in~\S\ref{sec:eta} that $\eta$ is well-defined and continuous and, in case $T < \infty$, it holds
\begin{align} \limsup_{t \uparrow T} \eta(t) = \infty. \label{contr}\end{align}
We remark that, although $W^{1,1}_1(\R,\C^n)$ is continuously embedded in $L^2_1(\R,\C^n)$, we need to include the $\left\||\cdot|w(s)\right\|_2$-term in $\eta(t)$ in order to obtain the desired estimates; we refer to Remark~\ref{L2boundnec} for more details.

Our plan is to prove via a continuous induction argument that $\eta$ is bounded and, consequently,~\eqref{contr} yields $T = \infty$. More specifically, we prove in~\S\ref{sec:key} that, if we have $\|v_0\|_{\smash{W^{1,1}_1}} \leq \delta$ and $t \in [0,T)$ is such that $\eta(t) \leq r_0$ (where $r_0 > 0$ is the constant given by the hypotheses of Theorem~\ref{mainresult1}), then $\eta(t)$ satisfies an inequality of the form
\begin{align} \eta(t) \leq C\left(\delta + \eta(t)^2\right). \label{etaest}\end{align}
Since $\eta$ must be continuous as long as it is bounded by~\eqref{contr}, we can apply continuous induction using~\eqref{etaest}. Thus, taking $\delta \leq \min\{\frac{1}{4C^2},\frac{r_0}{2C}\}$, it follows $\eta(t) \leq 2C\delta \leq r_0$ for \emph{all} $t \geq 0$, which proves global existence. Finally, we take $\delta = \min\{\frac{\epsilon}{2C},\frac{1}{4C^2},\frac{r_0}{2C}\}$, so that it holds
\begin{align}\eta(t) \leq 2C\delta \leq \epsilon, \label{etaest2} \end{align}
for $t \geq 0$. Since~\eqref{defw} implies
\begin{align*} \F^{-1}(w_i(t))(x) = u_i(x - c_i t,t), \qquad \F^{-1}(\partial_k w_i(t))(x) = -\ri x u_i(x-c_it,t), \qquad t \geq 0, x \in \R,\end{align*}
for $i = 1,\ldots,n$, the estimates~\eqref{temporaldec} and~\eqref{pointdec} follow from~\eqref{etaest2} and the fact that the Fourier transform maps $W^{1,1}_1(\R,\C^n)$ continuously into $W^{1,\infty}_1(\R,\C^n)$ with norm $\leq \frac{1}{2\pi}$.

Thus, all that remains is to show that $\eta$ is well-defined and continuous, that $T < \infty$ implies~\eqref{contr} and that $\eta$ satisfies the key estimate~\eqref{etaest}. We prove the first two assertions in~\S\ref{sec:eta}. The key estimate, which is the core of our global analysis, is shown in~\S\ref{sec:key}.

\subsection{Continuity and blow-up property of weight function} \label{sec:eta}

Since we have
\begin{align*}
\left\|w(t)\right\|_{\smash{W^{1,1}_1}} = \left\|(1+|\cdot|) v(t)\right\|_1 + \left\|(1+|\cdot|)\left(\partial_k v(t) - \ri \Ce t v(t)\right)\right\|_1
&\leq C(1+t)\|v(t)\|_{\smash{W^{1,1}_1}},
\end{align*}
for $t \in [0,T)$ and since $W^{1,1}_1(\R,\C^n)$ is continuously embedded into $L_1^2(\R,\C^n)$, the function $\eta$ is well-defined.

Next, we prove $\eta$ is continuous. Since $W^{1,1}_1(\R,\C^n)$ is continuously embedded into $L^2_1(\R,\C^n)$, it holds
\begin{align*}
\left|\left\||\cdot|^j w(t)\right\|_p - \left\||\cdot|^j w(s)\right\|_p\right| &= \left|\left\||\cdot|^j v(t)\right\|_p - \left\||\cdot|^j v(s)\right\|_p\right|
\leq C\|v(t) - v(s)\|_{{W^{1,1}_1}},
\end{align*}
for $s,t \in [0,T)$, $p = 1,2$ and $j = 0,1$. Hence, because $v \colon [0,T) \to W^{1,1}_1(\R,\C^n)$ is continuous, also $t \mapsto \left\||\cdot|^j w(t)\right\|_p$ is continuous on $[0,T)$ for $j = 0,1$ and $p = 1,2$. Second, we establish
\begin{align*}
\begin{split}
\left|\left\||\cdot|^j \partial_k w(t)\right\|_1 - \left\||\cdot|^j \partial_k w(s)\right\|_1\right|
&= \left|\left\||\cdot|^j \left(\partial_k v(t) - \ri \Ce t v(t)\right)\right\|_1 - \left\||\cdot|^j \left(\partial_k v(s) - \ri \Ce sv(s)\right)\right\|_1\right|\\
&\leq C\left(\left\||\cdot|^j \partial_k \left(v(t) - v(s)\right)\right\|_1 + |t-s|\left\||\cdot|^j \left(v(t) - v(s)\right)\right\|_1\right)\\
&\leq C\left(1 +|t-s|\right)\|v(t) - v(s)\|_{\smash{W^{1,1}_1}},
\end{split}
\end{align*}
for $s,t \in [0,T)$ and $j = 0,1$. So, since $v \colon [0,T) \to W^{1,1}_1(\R,\C^n)$ is continuous, also $t \mapsto \left\||\cdot|^j \partial_k w(t)\right\|_1$ is continuous on $[0,T)$ for $j = 0,1$. Therefore, $\eta$ must be continuous.

Finally, the fact that $T < \infty$ implies~\eqref{contr} follows from~\eqref{blowuplinfty2} and the estimate
\begin{align*}
\left\|v(t)\right\|_{\smash{W^{1,1}_1}} = \left\|(1+|\cdot|) w(t)\right\|_1 + \left\|(1+|\cdot|)\left(\partial_k w(t) + \ri \Ce t w(t)\right)\right\|_1
&\leq C\left(1+\sqrt{t}\right)\eta(t),
\end{align*}
for $t \in [0,T)$.

\subsection{Establishing the key estimate} \label{sec:key}

We integrate~\eqref{GRD}, apply the Fourier transform and multiply with $\Phi(k,t)$ to arrive at the Duhamel formulation:
\begin{align} w(k,t) = \re^{-k^2 \D t} v_0(k) + \int_0^{t} \re^{-k^2 \D(t-s)} \widetilde{\N}(k,s) \de s, \qquad k \in \R, t \in [0,T), \label{Duhamel3}\end{align}
with
\begin{align*} \widetilde{\N}(k,s) &:= \Phi(k,s) \F\left[f\left(u(s),\partial_x u(s)\right)\right](k),
\end{align*}
cf.~\eqref{Duhamel1} and~\eqref{defw}. It follows from~\eqref{Duhamel3} that $w(k,t)$ is pointwise differentiable with respect to $t$ and satisfies the differential equation
\begin{align}
\partial_t w(k,t) = -k^2\D w(k,t) + \widetilde{\N}(k,t), \qquad t \in [0,T), \, k \in \R. \label{wequation}
\end{align}

To isolate the marginal nonlinear terms we expand the nonlinearity $f$ in~\eqref{GRD}. Thus, by~\eqref{nonlinearbounds1}, the $i$-th component $f_i \in C^4(\R^n \times \R^n,\R)$ of $f$ can be expanded as
\begin{align*}
f_i(a,b) = \sum_{j = 1}^n \sum_{l = 1}^n \mu_{ijl} a_j b_l + \sum_{j = 1}^n \sum_{l = 1}^n \sum_{\begin{smallmatrix} m \in \{1,\ldots,n\},\\ m \neq l\end{smallmatrix}} \! \nu_{ijlm} a_ja_la_m + g_i(a,b),
\end{align*}
with coefficients $\mu_{ijl}, \nu_{ijlm} \in \R$ and remainder $g_i \in C^0(\R^n \times \R^n,\R)$ satisfying
\begin{align}
\|g_i(a,b)\| \leq C\left(\|a\|^4 + \|b\|^2\right), \label{irrelbound}
\end{align}
for $i = 1,\ldots,n$ and $a, b \in \R^n$ with $\|a\|,\|b\| \leq r_0$. Hence, the Duhamel formulation of the $i$-th component of $w$ reads
\begin{align}
w_i(k,t) &= I_i(k,t) + R_i(k,t) + \sum_{j = 1}^n \sum_{l = 1}^n M_{ijl}(k,t) + \sum_{j = 1}^n \sum_{l = 1}^n \sum_{\begin{smallmatrix} m \in \{1,\ldots,n\},\\ m \neq l\end{smallmatrix}} N_{ijlm}(k,t),  \label{Duhamel}
\end{align}
for $k \in \R$ and $t \in [0,T)$, with
\begin{align*}
\check{\N}_i(k,t) &:= \Phi(k,t) \F\left[g_i\left(u(t),\partial_x u(t)\right)\right](k),\\
I_i(k,t) := \re^{-d_i k^2 t} v_{0,i}(k), &\qquad \qquad \qquad \qquad R_i(k,t) := \int_0^t \re^{-d_i k^2(t-s)} \check{\N}_i(k,s) \de s,
\end{align*}
and
\begin{align*}
M_{ijl}(k,t) &:= \frac{\mu_{ijl}}{2\pi} \int_0^t \int_\R \re^{-d_i k^2(t-s) + (c_j - c_i)\ri k s + (c_l - c_j)\ri \xi s} w_j(k-\xi,s) \, \ri \xi  w_l(\xi,s) \de \xi \de s,\\
N_{ijlm}(k,t) &:= \frac{\nu_{ijlm}}{4\pi^2} \int_0^t \int_\R \int_\R \re^{-d_i k^2(t-s) + (c_j - c_i)\ri k s + (c_l - c_j)\ri \xi s + (c_m - c_l)\ri \zeta s} \\
&\qquad \qquad \qquad \qquad \qquad \qquad \times w_j(k-\xi,s)w_l(\xi-\zeta,s)w_m(\zeta,s) \de \zeta \de \xi \de s,
\end{align*}
for $i,j,l,m \in \{1,\ldots,n\}$. Our plan is to prove the key inequality~\eqref{etaest}, provided $t \in [0,T)$ is such that $\eta(t) \leq r_0$, by estimating the linear term $I_i(\cdot,t)$ and nonlinear terms $R_i(\cdot,t), M_{ijl}(\cdot,t)$ and $N_{ijlm}(\cdot,t)$ in~\eqref{Duhamel} one by one in $W^{1,1}_1(\R,\C^n)$ for $i,j,l,m \in \{1,\ldots,n\}$ with $m \neq l$.

\subsubsection{Embedding in \texorpdfstring{$L^2(\R,\C^n)$}{ } and \texorpdfstring{$L^\infty_1(\R,\C^n)$}{ }}

Take $t \in [0,T)$. To bound those integrals in~\eqref{Duhamel} corresponding to the nonlinear terms, we need control over the $L^2_1$- and $L^\infty_1$-norm of $w(s)$ for $s \in [0,t]$. Thus, take $t \in [0,T)$. Since $W^{1,1}(\R,\C^n)$ is continuously embedded in $L^\infty(\R,\C^n)$, we have, by definition of the weight $\eta$, the following bounds:
\begin{align}
\begin{split}
\|w(s)\|_\infty &\leq C\| w(s)\|_{W^{1,1}} \leq C\left(\|w(s)\|_1 + \|\partial_k w(s)\|_1\right) \leq C\eta(t), \\
\||\cdot|w(s)\|_\infty &\leq C\||\cdot| w(s)\|_{W^{1,1}} \leq C\left(\||\cdot| w(s)\|_1 + \||\cdot| \partial_k w(s)\|_1 + \|w(s)\|_1\right) \leq C\frac{\eta(t) \ln(2+s)}{\sqrt{1+s}}.
\end{split}
\label{linftyemb}
\end{align}
for $s \in [0,t]$. Hence, interpolation yields
\begin{align}
\begin{split}
\|w(s)\|_2 &\leq C\sqrt{\|w(s)\|_1 \|w(s)\|_\infty} \leq C\frac{\eta(t)}{(1+s)^{\frac{1}{4}}}, \qquad s \in [0,t].
\end{split}
\label{l2emb}
\end{align}

\begin{remark} \label{L2boundnec}
{\upshape
We expect that the bound
\begin{align}
\||\cdot| w(s)\|_2 &\leq C\sqrt{\||\cdot| w(s)\|_1 \, \||\cdot|w(s)\|_\infty} \leq C\frac{\eta(t) \sqrt{\ln(2+s)}}{(1+s)^{\frac{3}{4}}}, \qquad s \in [0,t], \label{l2bad}
\end{align}
obtained through interpolation, is not strong enough to close the nonlinear iteration scheme. Indeed,~\eqref{l2bad} would introduce a logarithm in~\eqref{irrb2}, which would lead to a $\smash{\!\sqrt{\ln(2+s)}}$-factor in the bound on $\|\partial_k w(s)\|_1$ via~\eqref{irrbad} and, thus, on $\|w(s)\|_\infty$ in~\eqref{linftyemb}, which we expect cannot be accommodated for. This is the reason why we include $\||\cdot| w(s)\|_2$ in our temporal weight function $\eta(t)$.
}\end{remark}

\subsubsection{Linear estimates} \label{sec:lin}

Let $i \in \{1,\ldots,n\}$. First, since $W^{1,1}_1(\R,\C^n)$ is continuously embedded in $L^\infty(\R,\C^n)$, we have for $j = 0,1$ the estimate
\begin{align*} \left\||\cdot|^j I_i(t)\right\|_1 &= \int_\R \left|k^j \re^{-k^2 d_i t} v_{0,i}(k)\right| \de k
\begin{cases} \leq C\left\||\cdot|^j v_{0,i}\right\|_1 \leq C \|v_0\|_{\smash{W^{1,1}_1}} \leq C\delta, & t \in [0,T),\\
\leq Ct^{-\frac{1+j}{2}} \left\|v_{0,i}\right\|_\infty \leq C t^{-\frac{1+j}{2}} \|v_0\|_{\smash{W^{1,1}_1}} \leq C\delta t^{-\frac{1+j}{2}},& t \in (0,T).
\end{cases}
\end{align*}
Moreover, since $W^{1,1}_1(\R,\C^n)$ is continuously embedded in $L^\infty(\R,\C^n)$ and in $L^2_1(\R,\C^n)$, it holds
\begin{align*} \left\||\cdot| I_i(t)\right\|_2 &= \left(\int_\R \left|k \re^{-k^2 d_i t} v_{0,i}(k)\right|^2 \de k\right)^{\frac{1}{2}}
\begin{cases} \leq C\left\||\cdot| v_{0,i}\right\|_2 \leq C \|v_0\|_{\smash{W^{1,1}_1}} \leq C\delta, & t \in [0,T),\\
\leq Ct^{-\frac{3}{4}} \left\|v_{0,i}\right\|_\infty \leq C t^{-\frac{3}{4}} \|v_0\|_{\smash{W^{1,1}_1}} \leq C\delta t^{-\frac{3}{4}},& t \in (0,T).
\end{cases}
\end{align*}
Next, we establish
\begin{align*} \left\||\cdot| \partial_k I_i(t)\right\|_1 &\leq C\left(\int_\R \left|k^2 t \re^{-k^2 d_i t} v_{0,i}(k)\right| \de k +  \int_\R \left|k \re^{-k^2 d_i t} \partial_k v_{0,i}(k)\right| \de k\right)\\
&\leq \begin{cases} C\left(\|v_{0,i}\|_1 + \left\||\cdot| \partial_k v_{0,i}\right\|_1\right) \leq  C \|v_0\|_{\smash{W^{1,1}_1}} \leq C\delta, & t \in [0,T),\\
\frac{C}{\sqrt{t}}\left(\|v_{0,i}\|_\infty + \left\|\partial_k v_{0,i}\right\|_1\right) \leq  \frac{C}{\sqrt{t}} \|v_0\|_{\smash{W^{1,1}_1}} \leq C\frac{\delta}{\sqrt{t}}, & t \in (0,T).
\end{cases}
\end{align*}
Finally, it holds
\begin{align*} \left\|\partial_k I_i(t)\right\|_1 &\leq C\left(\int_\R \left|k t \re^{-k^2 d_i t} v_{0,i}(k)\right| \de k +  \int_\R \left|\re^{-k^2 d_i t} \partial_k v_{0,i}(k)\right| \de k\right)\\
&\leq C\left(\|v_{0,i}\|_\infty + \left\|\partial_k v_{0,i}\right\|_1\right) \leq  C \|v_0\|_{\smash{W^{1,1}_1}} \leq C\delta.
\end{align*}
for $t \in [0,T)$. All in all, we have established the linear estimates
\begin{align}
\left\||\cdot|^j \partial_k^m I_i(t)\right\|_1 \leq C\delta(1+t)^{-\frac{1+j-m}{2}}, \qquad \left\||\cdot| I_i(t)\right\|_2 \leq C\delta(1+t)^{-\frac{3}{4}}, \label{linest}
\end{align}
for $t \in [0,T)$, $j = 0,1$, $m = 0,1$ and $i \in \{1,\ldots,n\}$.

\subsubsection{Estimates on irrelevant nonlinear terms}

Let $i \in \{1,\ldots,n\}$ and let $t \in [0,T)$ be such that $\eta(t) \leq r_0$. For $s \in [0,t]$ and $j = 0,1$, we have by~\eqref{defFourier},~\eqref{defw} and~\eqref{l2emb} the estimate
\begin{align}
\left\|\partial_x^j u(s)\right\|_\infty &\leq \frac{1}{2\pi} \left\||\cdot|^j v(s)\right\|_1 \leq \left\||\cdot|^j w(s)\right\|_1 \leq \frac{\eta(t)}{(1+s)^{\frac{1+j}{2}}} \leq r_0, \label{A1}
\end{align}
and
\begin{align}
\begin{split}
\left\|\partial_x^j u(s)\right\|_2 &\leq C\left\||\cdot|^j v(s)\right\|_2 = C\left\||\cdot|^j w(s)\right\|_2 \leq C\frac{\eta(t)}{(1+s)^{\frac{1+2j}{4}}},\\
\left\||\cdot| \partial_x^j u(s)\right\|_\infty &\leq C\left\|\partial_k\left((\cdot)^j v(s)\right)\right\|_1 \leq C\left(\left\||\cdot|^j \partial_k w(s)\right\|_1 + \left\|w(s)\right\|_1 + s\left\||\cdot|^j w(s)\right\|_1\right)\\ & \leq C\eta(t)(1+s)^{\frac{1-j}{2}}.
\end{split} \label{A2}
\end{align}
Thus,~\eqref{irrelbound},~\eqref{A1} and~\eqref{A2} yield
\begin{align}
\begin{split}
\left\|\check{\N}_i(\cdot,s)\right\|_\infty &\leq C\left\|g_i\left(u(s),\partial_x u(s)\right)\right\|_1 \leq C\left(\|u(s)\|_\infty^2\|u(s)\|_2^2 + \|\partial_x u(s)\|_2^2\right)
\leq C\frac{\eta(t)^2}{(1+s)^{\frac{3}{2}}},\\
\left\|\check{\N}_i(\cdot,s)\right\|_2 &\leq C\left\|g_i\left(u(s),\partial_x u(s)\right)\right\|_2 \leq C\left(\|u(s)\|_\infty^3\|u(s)\|_2 + \|\partial_x u(s)\|_\infty\|\partial_x u(s)\|_2\right) \leq C\frac{\eta(t)^2}{(1+s)^{\frac{7}{4}}},
\end{split}\label{irrb1}
\end{align}
and
\begin{align}
\begin{split}
\left\|\partial_k \check{\N}_i(\cdot,s)\right\|_2 &\leq C\left(s\left\|g_i\left(u(s),\partial_x u(s)\right)\right\|_2 + \left\||\cdot| g_i\left(u(s),\partial_x u(s)\right)\right\|_2\right)\\
&\leq C\left(s\left\|g_i\left(u(s),\partial_x u(s)\right)\right\|_2 + \||\cdot| u(s)\|_\infty \|u(s)\|_\infty^2\|u(s)\|_2 + \||\cdot| \partial_x u(s)\|_\infty\|\partial_x u(s)\|_2\right)\\ &\leq C\frac{\eta(t)^2}{(1+s)^{\frac{3}{4}}},
\end{split} \label{irrb2}
\end{align}
for $s \in [0,t]$. As in estimates~\eqref{irrill1} and~\eqref{irrill2}, we estimate for $j = 0,1$ using the first equation in~\eqref{irrb1}:
\begin{align*} \left\||\cdot| R_i(t)\right\|_2 &\leq C \int_0^t \left(\int_\R \left|k \re^{-d_i k^2(t-s)} \check{\N}_i(k,s)\right|^2 \de k\right)^{\frac{1}{2}} \de s \leq C \int_0^t \frac{\eta(t)^2}{(t-s)^{\frac{3}{4}}(1+s)^{\frac{3}{2}}} \de s \leq C\frac{\eta(t)^2}{(1+t)^{\frac{3}{4}}},\end{align*}
and
\begin{align*} \left\||\cdot|^j R_i(t)\right\|_1 &\leq C \int_0^t \int_\R \left|k^j \re^{-d_i k^2(t-s)} \check{\N}_i(k,s)\right| \de k \de s\\
&\leq C\eta(t)^2 \left(\int_0^{\frac{t}{2}} \frac{1}{(t-s)^{\frac{1+j}{2}}(1+s)^{\frac{3}{2}}} \de s + \int_{\frac{t}{2}}^t \frac{1}{(t-s)^{\frac{1+2j}{4}}(1+s)^{\frac{7}{4}}} \de s \right) \leq C\frac{\eta(t)^2}{(1+t)^{\frac{1+j}{2}}}.
\end{align*}
Similarly,~\eqref{irrb1} and~\eqref{irrb2} yield
\begin{align}
\label{irrbad}
\begin{split}
\left\||\cdot|^j \partial_k R_i(t)\right\|_1 &\leq C\left( \int_0^t\! \int_\R \left|k^{j+1} (t-s) \re^{-d_i k^2(t-s)} \check{\N}_i(k,s)\right| \de k \de s\right. \\
&\qquad \qquad \qquad \qquad \left. + \, \int_0^t \int_\R \left|k^j \re^{-d_i k^2(t-s)} \partial_k \check{\N}_i(k,s)\right| \de k \de s\right) \\
&\leq C\eta(t)^2 \left(\int_0^t \frac{1}{(t-s)^{\frac{j}{2}}(1+s)^{\frac{3}{2}}} \de s + \int_0^t \frac{1}{(t-s)^{\frac{1+2j}{4}}(1+s)^{\frac{3}{4}}} \de s\right)
\leq C\frac{\eta(t)^2}{(1+t)^{\frac{j}{2}}},
\end{split}
\end{align}
for $j = 0,1$. All in all, we have established the nonlinear estimates
\begin{align}
\left\||\cdot|^j \partial_k^m R_i(t)\right\|_1 \leq C\frac{\eta(t)^2}{(1+t)^{\frac{1+j-m}{2}}}, \qquad \left\||\cdot| R_i(t)\right\|_2 \leq C\frac{\eta(t)^2}{(1+t)^{\frac{3}{4}}}, \label{nonlinestR}
\end{align}
for $t \in [0,T)$, $j = 0,1$, $m = 0,1$ and $i \in \{1,\ldots,n\}$.

\subsubsection{Short-time bounds on marginal terms with derivatives}

Let $i,j,l \in \{1,\ldots,n\}$ and let $t \in [0,T)$. As in~\S\ref{sec:illfreq}, we split our estimates on $M_{ijl}(t)$ in short- and large-time estimates. In this subsection, we establish short-time bounds on $M_{ijl}(t)$. Large-time estimates are then obtained in~\S\ref{sec:Burgers} and~\S\ref{bound:M}. Thus, for $t \leq 2$, $a = 0,1$ and $b = 0,1$, we establish
\begin{align}
\begin{split}
&\left\||\cdot|^a \partial_k^b M_{ijl}(t)\right\|_1 \leq C\left(\int_0^t \int_\R \int_\R \left|k^a \re^{-d_i k^2(t-s)} \partial_k^b w_j(k-\xi,s) \, \xi w_l(\xi,s)\right| \de \xi \de k \de s\right.\\
&\qquad \quad \left. + \, \int_0^t \int_\R \int_\R \left|k^a\left(d_i|k|(t-s) + |c_j-c_i|s\right) \re^{-d_i k^2(t-s)} w_j(k-\xi,s) \, \xi w_l(\xi,s)\right| \de \xi \de k \de s\right)\\
&\quad \leq C\eta(t)^2 \int_0^t \frac{1}{(t-s)^{\frac{a}{2}}} \de s \leq C\frac{\eta(t)^2}{(1+t)^{\frac{1+a-b}{2}}},
\end{split} \label{shorttime}
\end{align}
and
\begin{align}
\begin{split}
\left\||\cdot| M_{ijl}(t)\right\|_2 &\leq C \int_0^t \left(\int_\R \left(\int_\R \left|k \re^{-d_i k^2(t-s)} w_j(k-\xi,s) \, \xi w_l(\xi,s) \right| \de \xi\right)^2 \de k \right)^{\frac{1}{2}} \de s\\ & \leq C\eta(t)^2 \int_0^t \frac{1}{(t-s)^{\frac{3}{4}}} \de s \leq C\frac{\eta(t)^2}{(1+t)^{\frac{3}{4}}}.
\end{split} \label{shorttime2}
\end{align}

\subsubsection{Estimates on Burgers'-type terms} \label{sec:Burgers}

Burgers'-type terms yield integrals of the form $M_{ijj}(t)$ in the Duhamel formulation~\eqref{Duhamel}, which can be rewritten using
\begin{align}
\begin{split}
&\int_0^t \int_\R \re^{-d_i k^2(t-s) + (c_j - c_i)\ri k s} w_j(k-\xi,s) \, \ri \xi  w_j(\xi,s) \de \xi \de s= \frac{\ri k}{2} \int_0^t \re^{-d_i k^2(t-s) + (c_j - c_i)\ri k s}  w_j^{*2}(k,s)  \de s,
\end{split}
\label{BurgINT}
\end{align}
with $k \in \R$ and $t \in [0,T)$. In case $i \neq j$, we have $c_i \neq c_j$ due to differences in velocities and the exponential in~\eqref{BurgINT} is oscillatory in $s$. We exploit these oscillations by integrating by parts in the temporal variable $s$. We emphasize that such an integration could introduce singularities at $k = 0$, which are however cancelled by the factor $k$ in front of the integral in~\eqref{BurgINT}.

Thus, let $i,j \in \{1,\ldots,n\}$ and let $t \in [2,T)$ be such that $\eta(t) \leq r_0$. We use Young's convolution inequality,~\eqref{l2emb} and~\eqref{BurgINT} to bound
\begin{align*}
\left\||\cdot| M_{ijj}(t)\right\|_2 &\leq C \left(\int_0^{\frac{t}{2}} \left(\int_\R \left|k^2 \re^{-d_i k^2(t-s)} w_j^{*2}(k,s)\right|^2 \de k\right)^{\frac{1}{2}} \de s\right.\\
&\qquad \qquad \qquad \left. + \, \int_{\frac{t}{2}}^t \left(\int_\R \left|k \re^{-d_i k^2(t-s)} \int_\R w_j(k-\xi,s) \, \xi w_j(\xi,s) \de \xi \right| \de k\right)^{\frac{1}{2}} \de s\right)\\
&\leq C \eta(t)^2\left(\int_0^{\frac{t}{2}} \frac{1}{(t-s) (1+s)^{\frac{3}{4}}} \de s + \int_{\frac{t}{2}}^t \frac{1}{(t-s)^{\frac{3}{4}} (1+s)} \de s\right) \leq C\frac{\eta(t)^2}{(1+t)^{\frac{3}{4}}}.
\end{align*}
Similarly, for $a = 0,1$ we use~\eqref{linftyemb} and~\eqref{BurgINT} to estimate
\begin{align*}
\left\||\cdot|^a M_{ijj}(t)\right\|_1 &\leq C \left(\int_0^{\frac{t}{2}} \int_\R \left|k^{a+1} \re^{-d_i k^2(t-s)} w_j^{*2}(k,s)\right| \de k \de s\right.\\
&\qquad \qquad \qquad \left. + \, \int_{\frac{t}{2}}^t \int_\R \int_\R \left|k^a \re^{-d_i k^2(t-s)} w_j(k-\xi,s) \, \xi w_j(\xi,s) \right| \de \xi\de k \de s\right)\\
&\leq C \eta(t)^2 \left(\int_0^{\frac{t}{2}} \frac{1}{(t-s)^{1+\frac{a}{2}} \sqrt{1+s} } \de s + \int_{\frac{t}{2}}^t \frac{1}{(t-s)^{\frac{a}{2}} (1+s)^{\frac{3}{2}}} \de s\right) \leq C\frac{\eta(t)^2}{(1+t)^{\frac{1+a}{2}}}.
\end{align*}
The $k$-derivative of~\eqref{BurgINT} is the sum of the following three integrals
\begin{align*}
\I_{1,ij}(k,t) &:= \frac{\ri}{2} \int_0^t \left(-2d_i k^2(t-s) + 1\right) \re^{-d_i k^2(t-s) + (c_j - c_i)\ri k s} w_j^{*2}(k,s) \de s,\\
\I_{2,ij}(k,t) &:= \frac{\ri k}{2} \int_0^t \re^{-d_i k^2(t-s) + (c_j - c_i)\ri k s}  \partial_k \left(w_j^{*2}(k,s)\right) \de s,\\
\I_{3,ij}(k,t) &:= \frac{(c_i - c_j) k}{2} \int_0^t s \, \re^{-d_i k^2(t-s) + (c_j - c_i)\ri k s} w_j^{*2}(k,s) \de s,
\end{align*}
with $k \in \R$. In order to bound $\left\||\cdot|^a \partial_k M_{ijj}(t)\right\|_1$ for $a = 0,1$, we estimate these three integrals one by one. First, by~\eqref{l2emb} we have
\begin{align*}
\left\||\cdot|^a \I_{1,ij}(t)\right\|_1 &\leq C \int_0^t \int_\R \left|k^a \left(k^2(t-s) + 1\right)\re^{-d_i k^2(t-s)} w_j^{*2}(k,s)\right| \de k \de s\\
&\leq C \eta(t)^2\int_0^t \frac{1}{(t-s)^{\frac{1+2a}{4}} (1+s)^{\frac{3}{4}}} \de s \leq C\frac{\eta(t)^2}{(1+t)^{\frac{a}{2}}},
\end{align*}
for $a = 0,1$. For the second integral $\I_{2,ij}(t)$, we have, on the one hand, the estimate
\begin{align*}
\left\|\I_{2,ij}(t)\right\|_1 &\leq C\int_0^t \int_\R \left|k \re^{-d_i k^2(t-s)} \partial_k \left(w_j^{*2}(k,s)\right)\right| \de k \de s  \leq C \int_0^t \frac{\eta(t)^2}{\sqrt{t-s} \sqrt{1+s}} \de s \leq C \eta(t)^2.
\end{align*}
On the other hand, by~\eqref{l2emb} it holds
\begin{align*}
\left\||\cdot| \I_{2,ij}(t)\right\|_1 &\leq C\left(\int_0^t \int_\R \int_\R \left|k \re^{-d_i k^2(t-s)} (k-\xi) \partial_k w_j(k-\xi,s) w_j(\xi,s)\right| \de \xi \de k \de s\right. \\
&\qquad \qquad \left. + \, \int_0^t \int_\R \int_\R \left|k \re^{-d_i k^2(t-s)} \partial_k w_j(k-\xi,s) \xi \, w_j(\xi,s)\right| \de \xi \de k \de s\right)\\
&\leq C \eta(t)^2 \int_0^t \frac{\ln(2+s)}{(t-s)^{\frac{3}{4}} (1+s)^{\frac{3}{4}}} \de s \leq C\eta(t)^2 \frac{\ln(2+t)}{\sqrt{1+t}}.
\end{align*}
The last integral $\I_{3,ij}(t)$ vanishes if $i = j$. If $i \neq j$, then the exponential in $\I_{3,ij}(t)$ is oscillatory, since it holds $c_i \neq c_j$. Thus, assume $i \neq j$. Integration by parts yields
\begin{align*}
\I_{3,ij}(k,t) &= \frac{1}{2} \psi_{ij}(k) \left(\left[s\, \re^{-d_i k^2(t-s) + (c_j - c_i)\ri k s} w_j^{*2}(k,s)\right]_0^t - \int_0^t \re^{-d_i k^2(t-s) + (c_j - c_i)\ri k s} w_j^{*2}(k,s)\de s\right.\\
&\qquad\qquad \qquad \left. -\, \int_0^t s\,\re^{-d_i k^2(t-s) + (c_j - c_i)\ri k s} \partial_s \left(w_j^{*2}(k,s)\right) \de s\right),
\end{align*}
where we denote
\begin{align*}
\psi_{ij}(k) := \frac{c_i - c_j}{(c_j - c_i)\ri + d_i k}.
\end{align*}
Hence, because equation~\eqref{wequation} holds pointwise, $\I_{3,ij}(k,t)$ is the sum of the following five terms
\begin{align*}
\J_{1,ij}(k,t) &:= -\psi_{ij}(k) \int_0^t \int_\R s\,\re^{-d_i k^2(t-s) + (c_j - c_i)\ri k s} w_j(k-\xi,s) \widetilde{\N}_j(\xi,s) \de \xi \de s, \\
\J_{2,ij}(k,t) &:= d_j \,k\, \psi_{ij}(k) \int_0^t \int_\R s\,\re^{-d_i k^2(t-s) + (c_j - c_i)\ri k s} w_j(k-\xi,s) \, \xi w_j(\xi,s)\de \xi \de s, \\
\J_{3,ij}(k,t) &:= -\frac{1}{2} \psi_{ij}(k) \int_0^t \re^{-d_i k^2(t-s) + (c_j - c_i)\ri k s} w_j^{*2}(k,s) \de s,\\
\J_{4,ij}(k,t) &:= -d_j \psi_{ij}(k) \int_0^t \int_\R s\,\re^{-d_i k^2(t-s) + (c_j - c_i)\ri k s} (k-\xi) w_j(k-\xi,s) \, \xi w_j(\xi,s)\de \xi \de s, \\
\J_{5,ij}(k,t) &:= \frac{1}{2} \psi_{ij}(k)\, t\, \re^{(c_j - c_i) \ri k t} w_j^{*2}(k,t),
\end{align*}
for $k \in \R$, where we have
\begin{align*}
\widetilde{\N}_j(k,s) &:= \Phi(k,s) \F\left[f_j\left(u(s),\partial_x u(s)\right)\right](k), \qquad k \in \R, s \in [0,t].
\end{align*}
First, using~\eqref{nonlinearbounds1},~\eqref{A1} and~\eqref{A2} we establish
\begin{align*}
\begin{split}
\left\|\widetilde{\N}_j(\cdot,s)\right\|_2 &\leq C\left\|f_j\left(u(s),\partial_x u(s)\right)\right\|_2 \leq C\left(\|u(s)\|_\infty^2\|u(s)\|_2 + \|u(s)\|_\infty\|\partial_x u(s)\|_2 + \|\partial_x u(s)\|_\infty\|\partial_x u(s)\|_2\right)\\ & \leq C\frac{\eta(t)}{(1+s)^{\frac{5}{4}}},
\end{split}
\end{align*}
for $s \in [0,t]$. Hence, since $\psi_{ij}$ is bounded on $\R$, we arrive for $a = 0,1$ at
\begin{align*}
\left\||\cdot|^a \J_{1,ij}(t)\right\|_1 &\leq C \int_0^t \int_\R \int_\R \left|k^a \re^{-d_i k^2(t-s)} s w_j(k-\xi,s) \widetilde{\N}_j(\xi,s)\right| \de \xi \de k \de s\\
&\leq C \eta(t)^2 \int_0^t \frac{1}{(t-s)^{\frac{1+2a}{4}}(1+s)^{\frac{3}{4}}} \de s \leq C\frac{\eta(t)^2}{(1+t)^{\frac{a}{2}}}.
\end{align*}
Second, since also $k \mapsto k\psi_{ij}(k)$ is bounded on $\R$, it holds
\begin{align}
\label{log1}
\begin{split}
&\left\||\cdot|^a \J_{2,ij}(t)\right\|_1 \leq C\left(\int_0^{t-1} \int_\R \int_\R \left|k^{1+a} \re^{-d_i k^2(t-s)} s w_j(k-\xi,s) \, \xi w_j(\xi,s)\right|\de \xi \de k \de s\right.\\
&\qquad \qquad \qquad \qquad\left. + \, \int_{t-1}^t \int_\R \int_\R \left|k^{a} \re^{-d_i k^2(t-s)} s w_j(k-\xi,s) \, \xi w_j(\xi,s)\right|\de \xi \de k \de s\right)\\
&\qquad \qquad\leq C\eta(t)^2\left(\int_0^{t-1} \frac{1}{(t-s)^{\frac{1+a}{2}} \sqrt{1+s}} + \int_{t-1}^t \frac{1}{(t-s)^{\frac{a}{2}} \sqrt{1+s}} \de s\right) \leq C\frac{\eta(t)^2\left(\ln(2+t)\right)^a}{(1+t)^{\frac{a}{2}}}
\end{split}
\end{align}
for $a = 0,1$. Third, using~\eqref{l2emb}, we estimate
\begin{align*}
\left\||\cdot|^a \J_{3,ij}(t)\right\|_1 &\leq C \int_0^t \int_\R \left|k^a \re^{-d_i k^2(t-s) + (c_j - c_i)\ri k s} w_j^{*2}(k,s)\right| \de k \de s\\
&\leq C\eta(t)^2 \int_0^t \frac{1}{(t-s)^{\frac{1+2a}{4}}(1+s)^{\frac{3}{4}}} \de s \leq C\frac{\eta(t)^2}{(1+t)^{\frac{a}{2}}},
\end{align*}
and
\begin{align*}
\left\||\cdot|^a \J_{4,ij}(t)\right\|_1 &\leq C \int_0^t \int_\R \int_\R \left|k^a \re^{-d_i k^2(t-s)} s (k-\xi) w_j(k-\xi,s) \, \xi w_j(\xi,s)\right| \de \xi \de k \de s \\
&\leq C\eta(t)^2 \int_0^t \frac{1}{(t-s)^{\frac{1+2a}{4}}(1+s)^{\frac{3}{4}}} \de s \leq C\frac{\eta(t)^2}{(1+t)^{\frac{a}{2}}},
\end{align*}
for $a = 0,1$. Finally, we obtain
\begin{align*}
\left\||\cdot|^a \J_{5,ij}(t)\right\|_1 \leq C \int_\R \left|k^a t w_j^{*2}(k,t)\right| \de k \leq C\frac{\eta(t)^2}{(1+t)^{\frac{a}{2}}},
\end{align*}
for $a = 0,1$. The estimates on $\J_{b,ij}(t)$ for $b = 1,\ldots,5$ yield
\begin{align*}
\left\||\cdot|^a \I_{3,ij}(t)\right\|_1 \leq C\frac{\eta(t)^2\left(\ln(2+t)\right)^a}{(1+t)^{\frac{a}{2}}},
\end{align*}
for $a = 0,1$ and $i \neq j$, whereas $\I_{3,ij}(t)$ vanishes for $i = j$. Thus, combining the latter with the estimates on $\I_{1,ij}(t)$ and $\I_{2,ij}(t)$, we arrive at
\begin{align*}
\left\||\cdot|^a \partial_k M_{ijj}(t)\right\|_1 \leq C\frac{\eta(t)^2\left(\ln(2+t)\right)^a}{(1+t)^{\frac{a}{2}}},
\end{align*}
for $a = 0,1$. Finally, combining the estimates on $M_{ijj}(t)$ with the short-time bounds~\eqref{shorttime} and~\eqref{shorttime2}, we establish
\begin{align}
\left\||\cdot|^a \partial_k^b M_{ijj}(t)\right\|_1 \leq C\frac{\eta(t)^2\left(\ln(2+t)\right)^{ab}}{(1+t)^{\frac{1+a-b}{2}}}, \qquad \left\||\cdot| M_{ijj}(t)\right\|_2 \leq C\frac{\eta(t)^2}{(1+t)^{\frac{3}{4}}}, \label{nonlinestM1}
\end{align}
for $t \in [0,T)$, $a = 0,1$, $b = 0,1$ and $i,j \in \{1,\ldots,n\}$.

\subsubsection{Estimates on marginal mixed-terms with derivatives} \label{bound:M}

All marginal mixed-terms with derivatives yield integrals of the form $M_{ijl}(t)$ with $j \neq l$ in the Duhamel formulation~\eqref{Duhamel}. Since we have $c_l \neq c_j$ if $j \neq l$ due to differences in velocities, the exponential in $M_{ijl}(t)$ is oscillatory in $\xi$. We exploit these oscillations by integrating by parts in frequency.

Thus, let $i,j,l \in \{1,\ldots,n\}$ with $j \neq l$ and let $t \in [2,T)$. Integration by parts yields
\begin{align}
\int_\R \re^{(c_l - c_j)\ri \xi s} w_j(k-\xi,s) \, \ri \xi  w_l(\xi,s) \de \xi = -\int_\R \frac{\re^{(c_l - c_j)\ri \xi s}}{(c_l-c_j) \ri s} \partial_\xi \left( w_j(k-\xi,s) \, \ri \xi w_l(\xi,s)\right) \de \xi, \label{intspace2}
\end{align}
for $s \in (0,t]$ and $k \in \R$, where we use that $w_j(\cdot,s)$ and $|\cdot| w_l(\cdot,s)$ are $L^1$-localized as $w(s) \in W^{1,1}_1(\R,\C^n)$. We employ~\eqref{linftyemb} and~\eqref{intspace2} to bound
\begin{align*}
\left\||\cdot| M_{ijl}(t)\right\|_2 &\leq C \left(\int_0^1 \left(\int_\R \left(\int_\R \left|k \re^{-d_i k^2(t-s)} w_j(k-\xi,s) \, \xi  w_l(\xi,s) \right| \de \xi\right)^2 \de k\right)^{\frac{1}{2}} \de s\right.\\
&\qquad \qquad \left. \, + \int_1^t \left(\int_\R \left(\int_\R \left|k \re^{-d_i k^2(t-s)}s^{-1} \partial_\xi \left(w_j(k-\xi,s) \, \xi  w_l(\xi,s)\right) \right| \de \xi\right)^2 \de k\right)^{\frac{1}{2}} \de s\right)\\
&\leq C \eta(t)^2 \left(\int_0^1 \frac{1}{(t-s)^{\frac{3}{4}} (1+s)} \de s + \int_1^t \frac{\ln(2+s)}{s(t-s)^{\frac{3}{4}} \sqrt{1+s} } \de s\right) \leq C\frac{\eta(t)^2}{(1+t)^{\frac{3}{4}}},
\end{align*}
and, similarly, for $a = 0,1$ we estimate
\begin{align*}
\left\||\cdot|^a M_{ijl}(t)\right\|_1 &\leq C \left(\int_0^1 \int_\R \int_\R \left|k^a \re^{-d_i k^2(t-s)} w_j(k-\xi,s) \, \xi  w_l(\xi,s) \right| \de \xi \de k \de s\right.\\
&\qquad \qquad \left. \, + \int_1^t \int_\R \int_\R \left|k^a \re^{-d_i k^2(t-s)}s^{-1} \partial_\xi \left(w_j(k-\xi,s) \, \xi  w_l(\xi,s)\right) \right| \de \xi \de k \de s\right)\\
&\leq C \eta(t)^2 \left(\int_0^1 \frac{1}{(t-s)^{\frac{1+a}{2}} (1+s)} \de s + \int_1^{\frac{t}{2}} \frac{\ln(2+s)}{s(t-s)^{\frac{1+a}{2}} \sqrt{1+s} } \de s
+ \int_{\frac{t}{2}}^t \frac{\ln(2+s)}{s(t-s)^{\frac{a}{2}} (1+s)} \de s\right)\\
&\leq C\frac{\eta(t)^2}{(1+t)^{\frac{1+a}{2}}}.
\end{align*}
The $k$-derivative of $M_{ijl}(t)$ is the sum of the following three integrals
\begin{align*}
I_{1,ijl}(k,t) &:= -\frac{\mu_{ijl}d_i}{\pi} \int_0^t k(t-s) \re^{-d_i k^2(t-s) + (c_j - c_i)\ri k s} \int_\R \re^{(c_l - c_j)\ri \xi s} w_j(k-\xi,s) \, \ri \xi  w_l(\xi,s) \de \xi \de s,\\
I_{2,ijl}(k,t) &:= \frac{\mu_{ijl}(c_j-c_i)\ri}{2\pi} \int_0^t s \, \re^{-d_i k^2(t-s) + (c_j - c_i)\ri k s} \int_\R \re^{(c_l - c_j)\ri \xi s} w_j(k-\xi,s) \, \ri \xi  w_l(\xi,s) \de \xi \de s,\\
I_{3,ijl}(k,t) &:= \frac{\mu_{ijl}}{2\pi} \int_0^t \re^{-d_i k^2(t-s) + (c_j - c_i)\ri k s} \int_\R \re^{(c_l - c_j)\ri \xi s} \partial_k w_j(k-\xi,s) \, \ri \xi  w_l(\xi,s) \de \xi \de s,
\end{align*}
which we bound one-by-one. First, using~\eqref{linftyemb} and~\eqref{intspace2}, we arrive for $a = 0,1$ at
\begin{align*}
\left\||\cdot|^a I_{1,ijl}(t)\right\|_1 &\leq C\left(\int_0^1 \int_\R \int_\R \left|k^{a+1}(t-s)\re^{-d_i k^2(t-s)} w_j(k-\xi,s) \, \xi w_l(\xi,s)\right| \de \xi \de k \de s\right.\\
&\qquad \left. + \, \int_1^t \int_\R \int_\R \left|k^{a+1}(t-s)\re^{-d_i k^2(t-s)} s^{-1} \partial_\xi \left(w_j(k-\xi,s) \, \xi w_l(\xi,s)\right)\right| \de \xi \de k \de s\right)\\
&\leq C\eta(t)^2 \left(\int_0^1 \frac{1}{(t-s)^{\frac{a}{2}}(1+s)} \de s + \int_1^t \frac{\ln(2+s)}{(t-s)^{\frac{a}{2}}s \sqrt{1+s}} \de s\right) \leq C\frac{\eta(t)^2}{(1+t)^{\frac{a}{2}}}.
\end{align*}
Second, to bound $I_{2,ijl}(t)$, we rewrite the $\xi$-derivative in~\eqref{intspace2} as
\begin{align}
\begin{split}
\partial_\xi \left( w_j(k-\xi)\, \xi w_l(\xi)\right) &= \xi w_l(\xi) \partial_\xi \left(w_j(k-\xi)\right) - (k-\xi)  w_j(k-\xi) \partial_\xi w_l(\xi) + k w_j(k-\xi) \partial_\xi w_l(\xi)\\
&\qquad \qquad  + w_j(k-\xi) w_l(\xi),\end{split} \label{yderiv}
\end{align}
for $\xi, k \in \R$, where we suppress dependency on $s \in [0,t]$. Thus,~\eqref{l2emb},~\eqref{intspace2} and~\eqref{yderiv} lead for $a = 0,1$ to the estimate
\begin{align}
\begin{split}
&\left\||\cdot|^a I_{2,ijl}(t)\right\|_1 \leq C\left(\int_{t-1}^t \int_\R \int_\R \left|k^a \re^{-d_i k^2(t-s)} s \, w_j(k-\xi,s) \, \xi w_l(\xi,s)\right| \de \xi \de k \de s\right.\\
& \quad \ \left. + \, \int_0^{t-1} \int_\R \int_\R \left|k^a \re^{-d_i k^2(t-s)} (k-\xi) w_j(k-\xi,s) \partial_\xi w_l(\xi,s)\right| \de \xi \de k \de s\right.\\
&\quad \ \left. + \, \int_0^{t-1} \int_\R \int_\R \left|k^{a+1} \re^{-d_i k^2(t-s)} w_j(k-\xi,s) \partial_\xi w_l(\xi,s)\right| \de \xi \de k \de s
+ \int_0^{t-1} \int_\R \left|k^a \re^{-d_i k^2(t-s)} w_j^{*2}(k,s)\right| \de k \de s\right)\\
&\ \ \leq C\eta(t)^2 \left(\int_{t-1}^t \frac{1}{(t-s)^{\frac{a}{2}}\sqrt{1+s}} \de s
+ \int_0^{t-1} \frac{1}{(t-s)^{\frac{1+2a}{4}} (1+s)^{\frac{3}{4}}} \de s + \int_0^{t-1} \frac{1}{(t-s)^{\frac{1+a}{2}} \sqrt{1+s}} \de s\right) \\
&\ \ \leq C\eta(t)^2 \frac{\left(\ln(2+t)\right)^a}{(1+t)^{\frac{a}{2}}}.
\end{split} \label{log2}
\end{align}
Third, we establish
\begin{align*}
\left\||\cdot|^a I_{3,ijl}(t)\right\|_1 &\leq C\int_0^t \int_\R \int_\R \left|k^a \re^{-d_i k^2(t-s)} \partial_k w_j(k-\xi,s) \, \xi w_l(\xi,s)\right| \de \xi \de k \de s\\
&\leq \int_0^t \frac{1}{(t-s)^{\frac{1+2a}{4}} (1+s)^{\frac{3}{4}}} \de s \leq C\frac{\eta(t)^2}{(1+t)^{\frac{a}{2}}},
\end{align*}
for $a = 0,1$. Hence, the bounds on $I_{b,ijl}(t)$ for $b = 1,2,3$ yield
\begin{align*}
\left\||\cdot|^a \partial_k M_{ijl}(t)\right\|_1 \leq C\frac{\eta(t)^2\left(\ln(2+t)\right)^a}{(1+t)^{\frac{a}{2}}},
\end{align*}
for $a = 0,1$. Finally, combining the estimates on $M_{ijl}(t)$ with the short-time bounds~\eqref{shorttime} and~\eqref{shorttime2}, we arrive at
\begin{align}
\left\||\cdot|^a \partial_k^b M_{ijl}(t)\right\|_1 \leq C\frac{\eta(t)^2\left(\ln(2+t)\right)^{ab}}{(1+t)^{\frac{1+a-b}{2}}}, \qquad \left\||\cdot| M_{ijl}(t)\right\|_2 \leq C\frac{\eta(t)^2}{(1+t)^{\frac{3}{4}}}, \label{nonlinestM2}
\end{align}
for $t \in [0,T)$, $a = 0,1$, $b = 0,1$ and $i,j,l \in \{1,\ldots,n\}$ with $j \neq l$.

\begin{remark} \label{logrem}
{\upshape
We note that the `artificial' $\ln(2+t)$-factor in the bound on $\||\cdot| \partial_k w(t)\|_1$ arises in the estimates~\eqref{log1} and~\eqref{log2}. We believe that such a bound can be avoided by integrating by parts in time in $\J_{2,ij}(k,t)$ and $I_{2,ijl}(t)$ in case $i \neq j$, which does not introduce singularities, since it holds $c_i \neq c_j$ and $\J_{2,ij}(k,t)$ and (the critical part in) $I_{2,ijl}(t)$ vanish at $k = 0$. However, in order not to overcomplicate the analysis we refrain from doing so.
}\end{remark}

\subsubsection{Estimates on marginal mixed-terms without derivatives}

All marginal mixed-terms without derivatives yield integrals of the form $N_{ijlm}(t)$ with $l \neq m$ in the Duhamel formulation~\eqref{Duhamel}. Since it holds $c_l \neq c_m$ if $l \neq m$, the exponential occurring in $N_{ijlm}(t)$ is oscillatory in $\zeta$. We exploit these oscillations by integrating by parts in frequency. Therefore, the procedure in this section quite similar as in~\S\ref{bound:M}.

Thus, let $i,j,l,m \in \{1,\ldots,n\}$ with $m \neq l$ and let $t \in [0,T)$ be such that $\eta(t) \leq r_0$. Integration by parts yields
\begin{align}
\int_\R \re^{(c_m - c_l)\ri \zeta s} w_l(\xi-\zeta,s)w_m(\zeta,s) \de \zeta = -\int_{\R}  \frac{\re^{(c_m - c_l)\ri \zeta s}}{(c_m - c_l) \ri s}  \partial_\zeta \left(w_l(\xi-\zeta,s)w_m(\zeta,s)\right) \de \zeta, \label{intspace}
\end{align}
for $s \in (0,t]$ and $\xi \in \R$, where we use that $w_l(\cdot,s)$ and $w_m(\cdot,s)$ are $L^1$-localized. In case $t \geq 2$, we use~\eqref{linftyemb} and~\eqref{intspace} to estimate
\begin{align*}
\left\||\cdot| N_{ijlm}(t)\right\|_2 &\leq C\left(\int_0^1 \left(\int_\R \left(\int_\R \int_\R \left|k \re^{-d_i k^2(t-s)} w_j(k-\xi,s)w_l(\xi-\zeta,s)w_m(\zeta,s)\right| \de \zeta \de \xi\right)^2 \de k\right)^{\frac{1}{2}} \de s\right.\\
&\quad \ \left.+ \, \int_1^t \left(\int_\R \left(\int_\R \int_\R \left|k \re^{-d_i k^2(t-s)} s^{-1} w_j(k-\xi,s)\partial_\zeta \left(w_l(\xi-\zeta,s)w_m(\zeta,s)\right)\right| \de \zeta \de \xi \right)^2 \de k\right)^{\frac{1}{2}} \de s\right)\\
&\leq C \eta(t)^2 \left(\int_0^1 \frac{1}{(t-s)^{\frac{3}{4}} (1+s)} \de s + \int_1^t \frac{1}{s(t-s)^{\frac{3}{4}} \sqrt{1+s} } \de s\right) \leq C\frac{\eta(t)^2}{(1+t)^{\frac{3}{4}}},
\end{align*}
and, similarly, for $a = 0,1$ we estimate
\begin{align*}
&\left\||\cdot|^a N_{ijlm}(t)\right\|_1 \leq C\left(\int_0^1 \int_\R \int_\R \int_\R \left|k^a \re^{-d_i k^2(t-s)} w_j(k-\xi,s)w_l(\xi-\zeta,s)w_m(\zeta,s)\right| \de \zeta \de \xi \de k \de s\right.\\
&\qquad \qquad \qquad \qquad \quad \left.+ \, \int_1^t \int_\R \int_\R \int_\R \left|k^a \re^{-d_i k^2(t-s)} s^{-1} w_j(k-\xi,s)\partial_\zeta \left(w_l(\xi-\zeta,s)w_m(\zeta,s)\right)\right| \de \zeta \de \xi \de k \de s\right)\\
&\qquad \leq C \eta(t)^2 \left(\int_0^1 \frac{1}{(t-s)^{\frac{1+a}{2}} (1+s)} \de s + \int_1^{\frac{t}{2}} \frac{1}{s(t-s)^{\frac{1+a}{2}} \sqrt{1+s} } \de s
+ \int_{\frac{t}{2}}^t \frac{1}{s(t-s)^{\frac{a}{2}} (1+s)} \de s\right)\\
&\qquad \leq C\frac{\eta(t)^2}{(1+t)^{\frac{1+a}{2}}}.
\end{align*}
Moreover, we establish via~\eqref{l2emb} and~\eqref{intspace}
\begin{align*}
&\left\||\cdot|^a \partial_k N_{ijlm}(t)\right\|_1 \\
&\ \ \leq C\left(\int_0^1 \int_\R \int_\R \int_\R \left|k^{a+1}d_i(t-s)\re^{-d_i k^2(t-s)} w_j(k-\xi,s) w_l(\xi-\zeta,s)w_m(\zeta,s)\right| \de \zeta \de \xi \de k \de s\right.\\
&\qquad\qquad \quad \left. + \, \int_1^t \int_\R \int_\R \int_\R \left|k^{a+1}d_i(t-s)\re^{-d_i k^2(t-s)} s^{-1} w_j(k-\xi,s) \partial_\zeta\left(w_l(\xi-\zeta,s)w_m(\zeta,s)\right)\right| \de \zeta \de \xi \de k \de s\right.\\
&\qquad\qquad \quad \left. + \, \int_0^t \int_\R \int_\R \int_\R \left|k^a(c_j-c_i)\re^{-d_i k^2(t-s)} w_j(k-\xi,s) \partial_\zeta\left(w_l(\xi-\zeta,s)w_m(\zeta,s)\right)\right| \de \zeta \de \xi \de k \de s\right.\\
&\qquad\qquad \quad \left. + \, \int_0^t \int_\R \int_\R \int_\R \left|k^a \re^{-d_i k^2(t-s)} \partial_k w_j(k-\xi,s) w_l(\xi-\zeta,s)w_m(\zeta,s)\right| \de \zeta \de \xi \de k \de s\right)\\
&\ \ \leq C\eta(t)^2 \left(\int_0^1 \frac{1}{(t-s)^{\frac{a}{2}}(1+s)} \de s + \int_1^t \frac{1}{(t-s)^{\frac{a}{2}}s \sqrt{1+s}} \de s + \int_0^t \frac{1}{(t-s)^{\frac{1+2a}{4}} (1+s)^{\frac{3}{4}}} \de s\right) \leq C\frac{\eta(t)^2}{(1+t)^{\frac{a}{2}}},
\end{align*}
for $a = 0,1$. In case $t \leq 2$, we establish the short-time bounds
\begin{align*}
&\left\||\cdot| N_{ijlm}(t)\right\|_2 \leq C\int_0^t \left(\int_\R \left(\int_\R \int_\R \left|k \re^{-d_i k^2(t-s)}  w_j(k-\xi,s) w_l(\xi-\zeta,s)w_m(\zeta,s)\right| \de \zeta \de \xi \right)^2 \de k\right)^{\frac{1}{2}} \de s\\
&\quad \leq C\eta(t)^2 \int_0^t \frac{1}{(t-s)^{\frac{3}{4}}} \de s \leq C\frac{\eta(t)^2}{(1+t)^{\frac{3}{4}}},
\end{align*}
and
\begin{align*}
&\left\||\cdot|^a \partial_k^b N_{ijlm}(t)\right\|_1 \leq C\left(\int_0^t \int_\R \int_\R \int_\R \left|k^a \re^{-d_i k^2(t-s)} \partial_k^b w_j(k-\xi,s) w_l(\xi-\zeta,s)w_m(\zeta,s)\right| \de \zeta \de \xi \de k \de s\right.\\
&\qquad \quad \left. + \, \int_0^t \int_\R \int_\R \int_\R \left|k^a\left(|k|(t-s) + |c_j-c_i|s\right) \re^{-d_i k^2(t-s)} w_j(k-\xi,s)w_l(\xi-\zeta,s)w_m(\zeta,s)\right| \de \zeta \de \xi \de k \de s\right)\\
&\quad \leq C\eta(t)^2 \int_0^t \frac{1}{(t-s)^{\frac{a}{2}}} \de s \leq C\frac{\eta(t)^2}{(1+t)^{\frac{1+a-b}{2}}},
\end{align*}
for $a = 0,1$ and $b = 0,1$. All in all, the analysis in this paragraph leads to the following nonlinear estimates
\begin{align}
\left\||\cdot|^a \partial_k^b N_{ijlm}(t)\right\|_1 \leq C\frac{\eta(t)^2}{(1+t)^{\frac{1+a-b}{2}}}, \qquad \left\||\cdot| N_{ijlm}(t)\right\|_2 \leq C\frac{\eta(t)^2}{(1+t)^{\frac{3}{4}}},
\label{nonlinestN}
\end{align}
for $t \in [0,T)$, $a = 0,1$, $b = 0,1$ and $i,j,l,m\in\{1,\ldots,n\}$ with $l \neq m$.

\subsubsection{Conclusion}

Finally, by combining~\eqref{Duhamel},~\eqref{linest},~\eqref{nonlinestR},~\eqref{nonlinestM1},~\eqref{nonlinestM2} and~\eqref{nonlinestN} we establish that, provided $v_0 \in W^{1,1}_1(\R,\C^n)$ satisfies $\|v_0\|_{\smash{W^{1,1}_1}} \leq \delta$ and $t \in [0,T)$ is such that $\eta(t) \leq r_0$, the key estimate~\eqref{etaest} holds true. This concludes, as explained in~\S\ref{sec:planM}, the proof of Theorem~\ref{mainresult1}.  $\hfill \Box$

\section{Future outlook} \label{sec:discussion}

This paper provides an alternative method to capture the effect of different velocities on the long-time dynamics of small, localized initial data in multi-component reaction-diffusion-advection systems. In combination with the earlier results in~\cite{RSDV}, we can affirm that, if each component propagates with a different velocity, then large classes of relevant and marginal nonlinearities in~\eqref{GRD} do not affect the decay of small, localized initial data. On the other hand, it is shown in~\cite[Theorem 1.4]{RSDV} that, even if each component exhibits different velocities, there are still nonlinearities which could lead to finite time blow-up of solutions with small initial data.

All in all, we are still far from a complete characterization. Perhaps the most pressing question is whether it is possible, as in two-component RDA systems, to include quadratic mixed-terms in the analysis for general multi-component RDA systems. It was already mentioned in~\cite[Section 8]{RSDV} that the method of pointwise estimates can be employed to handle quadratic mixed-terms in  $n$-component RDA systems for $n \geq 2$, if the nonlinearity has the special form $f(u,\partial_x u) = \mathrm{diag}(u_1,\ldots,u_n)g(u,\partial_x u)$ with $g \colon \R^n \times \R^n \to \R^n$ smooth, so that each term in the $i$-component has a contribution from the $i$-th component. However, it is still open how to handle quadratic mixed-terms in general $n$-component RDA systems for $n > 2$.

As outlined in Remark~\ref{quadraticmix}, one could try to extend the method in this paper to work for quadratic mixed-terms by taking stronger-than-polynomially localized initial data and by simultaneously controlling all frequency derivatives of the solution in Fourier space in the nonlinear iteration. A second, more refined, idea is to decompose the solution in Fourier space into a principal part, which is analytic in frequency and exhibits slow temporal decay, and a remainder, which decays faster in time; thus being in accordance with the algebraic-exponential decomposition of the pointwise bound~\eqref{pointwise} obtained from~\cite{RSDV}. The contributions in the Duhamel formulation coming from the principal part of a quadratic mixed-term can then be integrated by parts in frequency repeatedly to reveal additional temporal decay that arises due to differences in velocities.

Besides those future directions already discussed in~\cite[Section 8]{RSDV}, it would be interesting to extend the current method to larger classes of systems. A first gentle step would be to stay in the parabolic framework and to allow for cross-advection and cross-diffusion in~\eqref{GRD}. We expect that, after diagonalization, the current analysis or the one in~\cite{RSDV} can be employed. Another option would be to allow for spatially varying coefficients in~\eqref{GRD}. In case the spectrum of the linearization about the rest state $u = 0$ in~\eqref{GRD} is marginally stable and has multiple critical modes, differences in group velocities can, possibly after applying mode filters, be exploited. Moreover, it would be interesting to extend the current analysis beyond the parabolic framework. A natural first step in this direction would be to look at hyperbolic-parabolic systems. Finally, instead of the effect on the long-term dynamics of differences in advection between components, one could also investigate the effect of differences in nonlinear transport, for instance induced by nonlinear Burgers'-type terms.

\bibliographystyle{plain}
\bibliography{mybib}

\end{document}